\documentclass[11pt]{amsart}
\usepackage[english]{babel}
\usepackage[T1]{fontenc}

\usepackage{amsmath}
\usepackage{amsthm}
\usepackage{amssymb}
\usepackage[all]{xy}
\usepackage{mathrsfs}
\usepackage{enumerate}
\usepackage{bbm,dsfont}
\usepackage{bm}
\usepackage{color}

\usepackage{graphicx}
\usepackage{float}
\usepackage{url}

\newtheorem{theorem}{Theorem}[section]
\newtheorem{problem}[theorem]{Problem}
\newtheorem{lemma}[theorem]{Lemma}
\newtheorem{proposition}[theorem]{Proposition}

\newtheorem{definition}[theorem]{Definition}
\newtheorem{prob}[theorem]{Problem}
\theoremstyle{definition}
\newtheorem{remark}[theorem]{Remark}

\newtheorem{example}[theorem]{Example}

\newcommand{\w}{\omega}

\usepackage{xcolor}

\newcommand{\balpha}{{\bm{\alpha}}}
\newcommand{\bz}{{\bm{z}}}
\newcommand{\bw}{{\bm{w}}}

\newcommand{\rom}[1]{\expandafter{\romannumeral #1\relax}}
\newcommand{\RNum}[1]{\uppercase\expandafter{\romannumeral #1\relax}}

\newcommand{\ra}{\rangle}

\newcommand{\llangle}{\langle \kern -0.2em \langle}	
\newcommand{\rrangle}{\rangle \kern -0.2em \rangle}

\DeclareMathOperator*{\argmin}{arg\,min}

\author[N. Alpay]{Natanael Alpay}
\address{(NA)
Department of Mathematics\\ 
Universiy of California Irvine,
Irvine, CA 92697 \\
USA}
\email{nalpay@uci.edu}

\author[A.De Martino]{Antonino De Martino}
\address{(ADM) Politecnico di
	Milano\\Dipartimento di Matematica\\Via E. Bonardi, 9\\20133 Milano\\Italy}
\email{antonino.demartino@polimi.it}

\author[K. Diki]{Kamal Diki}
\address{(KD) Clifford Research Group, Department of Electronics and Information Systems, Faculty of Engineering and Architecture, Ghent University, Krijgslaan 281, 9000 Gent, Belgium.}
\email{Kamal.Diki@UGent.be}

\title[]{Representer theorem in complex reproducing kernel Hilbert spaces with applications to Fock and Hardy spaces and superoscillations}

\begin{document}
\begin{abstract}
We introduce a complex-valued counterpart of the representer theorem in machine learning. We study several learning and minimization problems in reproducing kernel Hilbert spaces (RKHSs), with the aim of identifying appropriate input-output data sets that allow specific functions to appear as solutions of regression-type minimization problems. In particular, we recover superoscillations in the Fock space, the Gaussian radial basis function (RBF) kernel in the corresponding RKHS, and finite Blaschke products in the Hardy space setting. We then extend the notion of superoscillations through suitable generalizations of the Fock space and investigate the associated learning problems.
This is a seminal work relating superoscillations and machine learning kernel methods via the representer theorem.
\end{abstract}
\maketitle
\tableofcontents

\noindent \textbf{AMS Classification:} 30H10, 30H20, 46E22, 62J07. \\

\noindent \textbf{Keywords:} Representer theorem, reproducing kernel Hilbert spaces, kernel methods, Fock space, Hardy space, superoscillations, Blaschke products.

\section{Introduction}

Reproducing kernel Hilbert spaces (RKHS) provide a natural meeting point for functional analysis, complex analysis, approximation theory, and statistical learning. We refer to \cite{PR2016, saitoh} for a general introduction to the theory of RKHSs, and to  \cite{BerlinetThomasAgnan2004} for a probability and statistics perspective. The starting point is that a positive definite kernel can be realized through the inner product of a Hilbert space of functions, a fact going back to the foundational work of Aronszajn; see \cite{Aronszajn1950, BerlinetThomasAgnan2004,saitoh}. \\ \\ From the point of view of applications, this observation leads to the so-called kernel trick: nonlinear dependence on the input variables can be handled through linear operations in a suitable feature space, without requiring an explicit parametrization of that space. This principle lies behind a large class of kernel-based methods in data analysis and learning theory, including kernel ridge regression (also known as Tikhonov regularization) and support vector machines; see \cite{ScholkopfHerbrichSmola2001, SVM}. A second key principle is the representer theorem. In its classical form, it asserts that a minimizer of a regularized empirical risk functional over an RKHS belongs to the finite-dimensional span of the kernel functions centered at the sample points. In this way, an infinite-dimensional variational problem is reduced to a finite-dimensional one.
This mechanism was introduced by Kimeldorf and Wahba in 1971, and has since become a standard tool in kernel methods; see \cite{KimeldorfWahba1971, ScholkopfHerbrichSmola2001, Wahba1990}
. Although these ideas are often presented in the real-valued setting, many spaces arising naturally in analysis and mathematical physics are intrinsically complex. This is the case, for instance, for the Fock space of entire functions and for the Hardy space on the unit disk. \\ \\ 

The present paper is motivated by the interaction between this RKHS point of view and the theory of superoscillations. Roughly speaking, a superoscillating sequence is built from elementary waves whose frequencies remain uniformly bounded, while the sequence converges on compact sets to a wave with a larger frequency. Originating in quantum mechanics, superoscillations have developed into a substantial analytic theory with applications to Schr\"odinger evolution, weak values, and related approximation problems; see \cite{SupBook, BZACSSTRQHL2019}. A connection between superoscillations and the Fock space was established in \cite{ACDSS}, where the authors observed, in \cite[Remark 4.2]{ACDSS}, the potential to interpret the classical superoscillation sequence as a specific solution to a learning problem involving the representer theorem. Our aim here is to show that this representer-theorem perspective indeed provides a convenient and systematic way to recover such functions as solutions of regression-type minimization problems, including the classical superoscillations. \\ \\

The paper has two main objectives. The first is to formulate and prove a representer theorem in the complex setting adapted to the classes of kernels considered here. The second is to use this result in a reverse learning direction: rather than starting from prescribed data and seeking the minimizer, we fix a function in a complex RKHS and determine input-output data for which that function is the minimizer of a regularized regression problem. This procedure is carried out in several situations. In the Fock space we recover classical superoscillations sequence functions and related Gaussian radial basis function (RBF) constructions. We then pass to various extensions of Fock-type spaces. For instance, we obtain supershift analogues associated with the Mittag-Leffler kernel function considered in \cite{mlf}. The case of other generalized Fock spaces involving Touchard polynomials that were studied recently in \cite{ADK} will be considered in this paper. We also investigate the case of the Hardy space on the disk and identify data for which a finite Blaschke product solves the corresponding minimization problem. Finally, we consider Cauchy evolution problems, including the free Schr\"odinger evolution generated by RBF-type superoscillatory data. \\ \\

The paper is organized as follows. Section~2 reviews some basic notions and properties about reproducing kernel Hilbert spaces, Fock spaces, and superoscillations.  In Section~3, we prove the complex representer theorem and derive the form of the output data in the regression setting. Section~4 is devoted to the Fock space and its relation with superoscillations. Section~5 treats supershift constructions in generalized RKHS. In Section~6, we discuss the Hardy-space setting and the case of finite Blaschke products. Finally, Section~7 studies the free-particle Schr\"odinger equation with RBF superoscillatory initial data.

\section{Preliminaries}
We will briefly review the properties of reproducing kernel Hilbert spaces needed in the following sections. We first recall the definition:

\begin{definition}
A reproducing kernel Hilbert space is a Hilbert space $(\mathcal H,\langle\cdot,\cdot\rangle)$ of functions defined in a non-empty set $\Omega$, such that there exists a complex-valued function $K(z,\omega)$ defined
on $\Omega\times\Omega$ and with the following properties:
	\begin{enumerate}
		\item $z\mapsto K(z,\omega)\in \mathcal H$ for any $ \omega \in \Omega$
		\item $ \langle f,K_\omega\ra=f(\omega)$, for any $f \in \mathcal{H}$.
	\end{enumerate}
\end{definition}

The function $K(z, \omega)$ is uniquely determined by the Riesz representation theorem and is called the reproducing kernel of the space. 
The reproducing kernel (or simply, the kernel) has a very important property: it is positive definite. 
That is, for all $N \in \mathbb{N}$, $\omega_1, \dots, \omega_N \in \Omega$, and $c_1, \dots, c_N \in \mathbb{C}$, we have
\[
\sum_{i=1}^{N} \sum_{j=1}^{N} c_i \overline{c_j}\, K(\omega_i, \omega_j) \geq 0.
\]

We refer to the book \cite{saitoh} for more information on reproducing kernel Hilbert spaces, and we recall that there is a one-to-one correspondence between positive definite functions on a given set
and reproducing kernel Hilbert spaces of functions defined on that set.

An example of reproducing kernel Hilbert space, that we will use in this paper is given by the Fock space, see \cite{zhu}.
\begin{definition}\label{def:fock}
	The Fock (or Segal-Bargmann-Fock) space $\mathcal F$ consists of entire functions $f$ that are square integrable with respect to the Gaussian measure, i.e. 
	\begin{equation}
		\label{gauss123}
		\mathcal{F}(\mathbb{C}) = \left\{ f\in \mathcal{H}(\mathbb{C}):\frac{1}{\pi}\int_{\mathbb C}|f(z)|^2e^{-|z|^2}dA(z)<\infty\right\},
	\end{equation}
	where $\mathcal{H}(\mathbb{C})$ denotes the space of entire functions, and $dA(z)=dxdy$ is the Lebesgue measure on $\mathbb{C}$ with $z=x+iy$.
\end{definition}

The reproducing kernel of the Fock space is given by
\begin{equation}
	\label{fock}
	B(z,w) =  e^{z\overline{\w}},\quad z,w\in \mathbb{C} .
\end{equation}
We remark that, up to a positive multiplicative factor, the Fock space is the unique Hilbert space of entire functions in which
\begin{equation}
	\label{fock-equation}
	\partial_z^*=M_z,
\end{equation}
where $\partial_z$ denote the derivative with respect to $z$. Now we review the basic notions of superoscillations; for further details, we refer the reader to \cite{SupBook}.

\begin{definition}
We call generalized Fourier sequence a sequence of the form
\begin{equation}
f_n(x)= \sum_{j=0}^{n}Z_j(n,a) e^{i h_j(n)x}
\end{equation}
where $a \in \mathbb{R}$, $n \in \mathbb{N}$, $Z_j(n,a)$ is complex-valued, and $h_j(n)$ is real-valued.
\end{definition}

\begin{remark}
The sequence of partial sums of a Fourier expansions is a particular case of the above notion with $Z_j(n,a)=Z_j \in \mathbb{R}$ and $h_j(n)=h_j \in \mathbb{R}$ are multiples of a real number.
\end{remark}

\begin{definition}
A generalized Fourier sequence 
\begin{equation}
	\label{Fourierg}
	f_n(x) = \sum_{j=0}^{n} Z_j(n,a)\, e^{i h_j(n) x}, \qquad n \in \mathbb{N}, \quad a \in \mathbb{R},
\end{equation}
is said to be a superoscillating sequence if 
\begin{itemize}
\item $|h_j(n)| \leq 1$ for all $n\in\mathbb{N}$ and $j \in \mathbb{N}_0$
\item   there exists a compact subset of $\mathbb{R}$, referred to as a superoscillation set, on which $f_n(x)$ converges uniformly to $e^{i g(a) x}$, where $g$ is a continuous real-valued function satisfying $|g(a)| > 1$.
\end{itemize}

\end{definition}

\begin{remark}
The term superoscillations arises from the fact that in the Fourier representation \eqref{Fourierg}, the frequencies $h_j(n)$ are bounded by $1$, while the limit function $e^{ig(a)t}$ has frequency $g(a)$, which can be arbitrarily larger than $1$. 
\end{remark}

The classical Fourier expansion is clearly not a superoscillating sequence, as its frequencies are generally unbounded.

\begin{example}
\label{super}
The prototypical example of a superoscillating sequence is the following:
\begin{equation}
	\label{reps}
	F_n(x,a)= \left(\cos \left(\frac{x}{n}\right)+ia \sin \left(\frac{x}{n}\right)\right)^n= \sum_{j=0}^{n}C_j(n,a)e^{i\left(1-\frac{2j}{n}\right)x}, \quad n \in \mathbb{N}, \quad x \in \mathbb{R},
\end{equation}
where $a>1$ and the coefficients $C_j(n,a)$ are given by
\begin{equation}\label{eq:cjzj}
	C_j(n,a) = {n\choose j} \left(\frac{1+a}{2}\right)^{n-j} \left(\frac{1-a}{2}\right)^j.
\end{equation}
If we fix $x \in \mathbb{R}$ and se tend $n$ to infinity, we obtain that
$$ \lim_{n\to\infty } F_n(t,a)= e^{iat},$$
and the limit is uniform on compact subsets of the real line.
\end{example}

\section{Complex Representer Theorem and Regression Minimization}

\subsection{General Setting}

Let $\Omega$ be a non empty set of $\mathbb{C}$, and the traning data set $\{(z_j,w_j)\}_{j=0}^{n}\subseteq \Omega \times \Omega$.
Denote 
$\bm{w}=(w_0,\dots, w_n)^T\in \mathbb{C}^{n+1\times 1}$,
and 
$\bm{z} = (z_0,\dots,z_n)^T\in \mathbb{C}^{n+1\times 1}$.
Let $K:\Omega\times \Omega \to \mathbb{C}$ be a positive definite kernel, and $\mathcal{H}_K$ be the associated RKHS with norm $\|\cdot\|_{\mathcal{H}_K}$, i.e. for any $z_j \in \Omega , \beta_j \in \mathbb{C}$,
$$
\left\|\sum_{i=0}^{\infty} \beta_i K\left(\cdot, z_i\right)\right\|^2=\sum_{i=0o}^{\infty} \sum_{j=0}^{\infty} \beta_i \bar{\beta}_j K\left(z_i, z_j\right) .
$$
The representer theorem is usually stated in the real case, see for example \cite[Thm 5.5]{SVM}. However, for the purposes of this paper, we require a representer theorem in the complex setting. Since we could not find a proof of this result in the literature, we provide one below.

\begin{theorem}[Complex Representer Theorem]\label{thm:crt}
     Let
     $\mathcal{H}_K$ be the RKHS associated to $K$, with norm $\|\cdot\|_{\mathcal{H}_K}$, 
     and $g$
     a strictly monotonically increasing real-valued function on $[0, \infty)$.
Let us consider the functional  $J:\mathcal{H}_K\to \mathbb{R}$ defined by

\begin{equation}
\label{eq:J}
J(f)
= L_{\bm{w}}\left(f\left(z_0\right), \ldots,f\left(z_n\right)\right)
+
g(\|f\|_{\mathcal{H}_K}), \quad \text{ for all } f\in \mathcal{H}_K,
\end{equation}
where $L_{\bm{w}}:\mathbb{C}^{n+1} \rightarrow\mathbb{R} \cup\{\infty\}$ is an arbitrary loss function. Then, any solution $f^*$ of the optimization problem
\[
f_*= \underset{f\in\mathcal{H}_K}\argmin ~  J(f),
\]
has the form
\begin{equation}\label{eq:fmin}
f_*(\cdot)=\sum_{k=0}^n \alpha_k K\left(\cdot, z_k\right) .    
\end{equation}

\end{theorem}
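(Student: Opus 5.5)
The plan is the standard orthogonal decomposition argument, carried out in the complex Hilbert space $\mathcal{H}_K$. Write $V=\operatorname{span}_{\mathbb{C}}\{K(\cdot,z_0),\dots,K(\cdot,z_n)\}$. This is a finite-dimensional, hence closed, subspace of $\mathcal{H}_K$. Every $f\in\mathcal{H}_K$ therefore decomposes uniquely as $f=f_V+f_\perp$, with $f_V=\sum_{k=0}^n\alpha_kK(\cdot,z_k)\in V$ and $f_\perp\in V^\perp$. The aim is to compare $J(f)$ with $J(f_V)$ and show that a minimizer must have $f_\perp=0$.

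\textbf{Step 1: the loss term only sees $f_V$.} By the reproducing property, for each $j$,
\[
f(z_j)=\langle f,K(\cdot,z_j)\rangle=\langle f_V,K(\cdot,z_j)\rangle+\langle f_\perp,K(\cdot,z_j)\rangle=f_V(z_j),
\]
because $K(\cdot,z_j)\in V$ and $f_\perp\perp V$. Hence $L_{\bm w}(f(z_0),\dots,f(z_n))=L_{\bm w}(f_V(z_0),\dots,f_V(z_n))$. The complex setting causes no difficulty here. The only point to watch is the convention that the inner product is linear in the first slot, consistent with $\langle f,K_\omega\rangle=f(\omega)$. Orthogonality is symmetric up to conjugation, so $\langle f_\perp,K(\cdot,z_j)\rangle=\overline{\langle K(\cdot,z_j),f_\perp\rangle}=0$.

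\textbf{Step 2: the regularizer strictly prefers $f_V$.} By Pythagoras in a complex Hilbert space, $\|f\|^2=\|f_V\|^2+\|f_\perp\|^2$. The cross terms $2\operatorname{Re}\langle f_V,f_\perp\rangle$ vanish since $\langle f_V,f_\perp\rangle=0$. Thus $\|f\|\ge\|f_V\|$, with equality if and only if $f_\perp=0$. Because $g$ is strictly increasing, $g(\|f\|)\ge g(\|f_V\|)$, with strict inequality whenever $f_\perp\neq0$.

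\textbf{Step 3: conclude.} Combining the two steps gives $J(f)\ge J(f_V)$ for all $f$, with strict inequality if $f_\perp\ne0$, provided $J(f_V)$ is finite. Now let $f_*$ be a minimizer and suppose $(f_*)_\perp\neq0$. Then $J((f_*)_V)<J(f_*)$, contradicting minimality. So $f_*=(f_*)_V\in V$, which is exactly the form \eqref{eq:fmin}.

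The main subtlety is that $L_{\bm w}$ may take the value $\infty$. If $J(f_*)=\infty$, the strict inequality becomes $\infty<\infty$, which fails. I would handle this by adopting the standard convention that a minimizer has finite objective value whenever $J\not\equiv\infty$. If $J\equiv\infty$, every $f$ is a minimizer and the statement would be false. So I would note that we assume $J(f_*)<\infty$, which is implicit in the problem being meaningful. Everything else in the argument is routine.
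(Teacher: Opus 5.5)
Your proposal is correct and follows the same route as the paper: decompose $f$ along the span $\mathcal{H}_{\mathcal{S}}$ of the kernel sections $K(\cdot,z_k)$ and its orthogonal complement, use the reproducing property to see that the loss depends only on the projection, and combine Pythagoras with the monotonicity of $g$. Your version is slightly more careful than the paper's, which records only $J(f)\ge J(f_{\mathcal{S}})$. The strict inequality when $f_\perp\neq 0$ (valid once $J(f_*)<\infty$, i.e.\ $J\not\equiv\infty$) is exactly what is needed to conclude that \emph{every} minimizer lies in the span.
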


\begin{proof}
Let $ J(f) $ be the functional to be minimized, and define $ \mathcal{H}_{\mathcal{S}} $ as the linear span in $ \mathcal{H}_K$ of the vectors $S=\{K_{z_j}\}_{j=0}^{n} $,
given by
\[ 
\mathcal{H}_{\mathcal{S}} = \left\{ f \in \mathcal{H} : f(z) = \sum_{k=0}^n \alpha_k K\left( z, z_k \right), \alpha_0, \cdots, \alpha_n \in \mathbb{C}\right\}.
\]

By the orthogonal decomposition theorem, we have $ \mathcal{H} = \mathcal{H}_{\mathcal{S}} \oplus \mathcal{H}_{\mathcal{S}}^{\perp} $. Therefore, any function $ f \in \mathcal{H} $ can be uniquely decomposed as
\[
f = f_{\mathcal{S}} + f_{\mathcal{S}}^{\perp},
\]
where $ f_{\mathcal{S}} \in \mathcal{H}_{\mathcal{S}} $ and $ f_{\mathcal{S}}^{\perp} \perp \mathcal{H}_{\mathcal{S}} $ (by orthogonal projection). We aim to find $ f^* = \arg\min_{f \in \mathcal{H}_K} J(f) $, i.e., $ J(f^*) \leq J(f) $ for all $ f \in \mathcal{H} $. We note that $f_\mathcal{S}$ and $f_{\mathcal{S}}^{\perp}$ are orthogonal, thus 
   \[
   \| f \|^2 = \| f_{\mathcal{S}} \|^2 + \| f_{\mathcal{S}}^{\perp} \|^2 \geq \| f_{\mathcal{S}} \|^2.
   \]
   Now, since $ f_{\mathcal{S}}^{\perp} \in \mathcal{H}_{\mathcal{S}}^{\perp} $ and $ K(z_i, \cdot) \in \mathcal{H}_{\mathcal{S}} $, by the reproducing kernel property we get 
   \[
   f_{\mathcal{S}}^{\perp} (z_k) = \langle f_{\mathcal{S}}^{\perp}, K(\cdot,z_k) \rangle = 0, \qquad  \text{ for all } k = 0, \dots, n.
   \]
Finally, since $g$ is strictly increasing we have for any $f\in \mathcal{H}_K$	
\begingroup\allowdisplaybreaks
   \begin{align*}
       J(f) &= L_\bw \left( f_{\mathcal{S}} (z_0) + f_{\mathcal{S}}^{\perp} (z_0), \cdots, f_{\mathcal{S}} (z_n) + f_{\mathcal{S}}^{\perp} (z_n) \right) + g (\| f \|^2) \\
       &\geq L_\bw \left( f_{\mathcal{S}} (z_0) + f_{\mathcal{S}}^{\perp} (z_0), \cdots, f_{\mathcal{S}} (z_n) + f_{\mathcal{S}}^{\perp} (z_n) \right) + g (\| f_{\mathcal{S}} \|^2) \\
       &= L_\bw \left( f_{\mathcal{S}} (z_0), \cdots, f_{\mathcal{S}} (z_n) \right) + g (\| f_{\mathcal{S}} \|^2) \\
       &= J(f_{\mathcal{S}}).
   \end{align*}
   \endgroup

Hence, the minimizing function $f_*$ lies in $ \mathcal{H}_{\mathcal{S}} $ and has the form $$f_*(z)= f_{\mathcal{S}}(z )= \sum_{k=0}^{n} \alpha_k K(z,z_k).$$
\end{proof}

\subsection{Regression Setting}
Let $\Omega$ be a non empty set of $\mathbb{C}$, and the traning data set $\{(z_j,w_j)\}_{j=0}^{n}\subseteq \Omega \times \Omega$.
Let $K$ be a kernel associated with the RKHS given by $\mathcal{H}_K$ and $f \in \mathcal{H}_K$. In the case of the regression the loss function and  the regularizer are given by

\[
L_{\bw}(f(z_0),\dots, f(z_n)) = \sum_{j=0}^{n}|w_j-f(z_j)|^2 , \qquad g(\|f\|^2)=\lambda \|f\|^2,
\]
for some $\lambda>0$. Thus, the empirical risk functional in this case is defined by
\begin{equation}\label{eq:jf}
    J(f) =L_{\bw}(f(z_0),\dots, f(z_n)) +\lambda \|f\|^2=
\sum_{j=0}^{n}|w_j-f(z_j)|^2  +\lambda \|f\|^2.
\end{equation}

According to the complex representer theorem, see Theorem \ref{thm:crt}, a function $f \in \mathcal{H}_K$ that minimizes the functional $J(f)$ is given by

$$
f_*(z) = \sum_{j=0}^{n} \alpha_j K(z, z_j), 
$$
Now, since $f \in \mathcal{H}_K$ we can write $f=h_1+h_2$, where $h_1 \in \mathcal{H}_S$ and $h_2 \in \mathcal{H}_S^{\perp}$. Thus we  have
$$ f(z)= \langle f, K_z \rangle= \langle h_1, K_z \rangle+ \langle h_2, K_z \rangle= h_1(z)=\sum_{j=0}^{n} \alpha_j K(z,z_j).$$
Therefore, by \eqref{eq:fmin} we get
\begin{equation}
\label{onestar}
f(z_k)=f_{*}(z_k)=\sum_{j=0}^n \alpha_j K(z_k, z_j).
\end{equation}
This implies that we can write the functional $J(f)$ as
\begin{equation}
\label{functional}
J(f) = \sum_{k=0}^{n} \left| w_k - \sum_{j=0}^{n} \alpha_j K(z_k, z_j) \right|^2 + \lambda \|f\|^2.
\end{equation}

Now, our goal is to find the $\bw = (w_0, \ldots, w_n)^T\in \mathbb{C}^{(n+1) \times 1}$ that minimizes the above functional.

\begin{lemma}
\label{fun}
For a fixed ${\bm{w}}\in \mathbb{C}^{(n+1) \times 1}$ and for $\balpha=(\alpha_0, \ldots, \alpha_n)^T \in \mathbb{C}^{(n+1) \times 1}$ the functional $J(f)$ can be rewritten as
\begin{equation}
	\label{functional1}
J(f) = (\bw - K\balpha)^* ( \bw - K \balpha) + \lambda \balpha^* K \balpha,
\end{equation}
where $K \in \mathbb{C}^{(n+1) \times (n+1)}$ is a symmetric matrix with entries $K(z_k, z_j)$ for $0 \leq k, j \leq n$ and $(.)^*$ denotes the conjugate-transpose matrix. 
\end{lemma}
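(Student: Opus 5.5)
The plan is to start from the expression \eqref{functional}, which follows from Theorem \ref{thm:crt}: $f$ is replaced by $f_*=\sum_{j=0}^n\alpha_j K(\cdot,z_j)$, so that $f(z_k)=\sum_j \alpha_j K(z_k,z_j)$ for each $k$ by \eqref{onestar}. The two terms of \eqref{functional} will be treated separately and each rewritten in matrix form.

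\textbf{Data term.} Let $K$ be the Gram matrix with entries $K_{kj}=K(z_k,z_j)$. The $k$-th component of $K\balpha$ is then $\sum_j K(z_k,z_j)\alpha_j=f_*(z_k)$. Hence the vector $\bw-K\balpha$ has components $w_k-f_*(z_k)$. For any $\bm v\in\mathbb{C}^{n+1}$ we have $\bm v^*\bm v=\sum_k|v_k|^2$, so
\[
\sum_{k=0}^n\Big|w_k-\sum_{j}\alpha_jK(z_k,z_j)\Big|^2=(\bw-K\balpha)^*(\bw-K\balpha).
\]

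\textbf{Norm term.} Expand $\|f_*\|^2=\langle \sum_i\alpha_iK_{z_i},\sum_j\alpha_jK_{z_j}\rangle$ using sesquilinearity: linear in the first slot, conjugate-linear in the second. This gives $\sum_{i,j}\alpha_i\overline{\alpha_j}\langle K_{z_i},K_{z_j}\rangle$. By the reproducing property, $\langle K_{z_i},K_{z_j}\rangle=K_{z_i}(z_j)=K(z_j,z_i)$. Therefore
\[
\|f_*\|^2=\sum_{i,j}\overline{\alpha_j}\,K(z_j,z_i)\,\alpha_i=\balpha^*K\balpha.
\]
This is exactly the positive-definiteness quadratic form recalled in the preliminaries. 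Combining the two terms yields \eqref{functional1}.

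\textbf{Main obstacle.} The only delicate point is the bookkeeping of conjugations and index order. One must check that the conjugate falls on $\alpha_j$ paired with the row index $j$ of $K(z_j,z_i)$, so that the form is $\balpha^*K\balpha$ and not $\balpha^TK\overline{\balpha}$. One must also address the word ``symmetric'' in the statement. By positive definiteness, $K(z_k,z_j)=\overline{K(z_j,z_k)}$, so the matrix is in fact Hermitian, $K^*=K$. I would point out that this Hermitian property is what is used; it is also what makes $\balpha^*K\balpha$ real and nonnegative. For real-valued kernels it reduces to symmetry.
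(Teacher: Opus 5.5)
Your proposal is correct and follows the paper's route. Like the paper, you split \eqref{functional} into the data term, identified as $(\bw-K\balpha)^*(\bw-K\balpha)$ via $(K\balpha)_k=f_*(z_k)$, and the norm term $\|f_*\|^2=\balpha^*K\balpha$. Your version is tidier in three ways: you get the data term directly from $\bm v^*\bm v=\sum_k|v_k|^2$ rather than the paper's term-by-term expansion, you supply the sesquilinearity and reproducing-property computation for $\|f_*\|^2$ that the paper only asserts, and your remark that $K$ is Hermitian rather than symmetric matches how the paper itself uses $K^*=K$ in Proposition \ref{prop:w}.
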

\begin{proof}
By \eqref{onestar}, we observe that the function $f_*$ can be expressed as the product of the matrix $K$ and the vector $\boldsymbol{\alpha}$, namely
\begin{equation}
	f_* = K \boldsymbol{\alpha}.
\end{equation}

This implies that
\begin{equation}
\label{p2}
\|f_*\|^2= \balpha^*K\balpha.
\end{equation}
By using the definition of the vector $\bm{w}$ and the matrix $K$ we have
		\begingroup\allowdisplaybreaks
\begin{eqnarray}
	\nonumber
\sum_{k=0}^{n} \left| w_k - \sum_{j=0}^{n}\alpha_j K(z_k,z_j) \right|^2&=&\sum_{k=0}^{n} \left[
|w_k|^2 -2\text{Re}\left( \bar{w}_k \sum_{j=0}^{n} \alpha_j K(z_k,z_j)  \right)+ \left|\sum_{j=0}^{n} \alpha_j K(z_k,z_j) \right|^2
\right]\\
\nonumber
&=& \sum_{k=0}^{n} 
\left[
|w_k|^2 
-2\text{Re}\left(\bar{w}_k\sum_{j=0}^{n} \alpha_j K(z_k,z_j)   \right)
\right.\\
\nonumber
&&\left.+\sum_{s=0}^{n}\sum_{j=0}^{n} \alpha_s\bar{\alpha}_j \overline{K(z_i,z_j)}K(z_k,z_s)
\right]\\
\nonumber
&=&\sum_{k=0}^{n}\left[ |w_k|^2 - \bar{w}_k \sum_{j=0}^{n}\alpha_j K(z_k,z_j) \right.\\
\nonumber
&&\left. - w_k \sum_{j=0}^{n} \bar{\alpha}_j \overline{K(z_k,z_j) }
+ \sum_{s=0}^{n} \sum_{j=0}^{n} \alpha_s \bar{\alpha}_j \overline{K(z_k,z_j)}K(z_k,z_s)
\right] \\
\nonumber
&=& (\bw-K\balpha)^*(\bw-K\balpha)\\
\label{comp1}
&=&|\bm{w}-K\balpha|^2.
\end{eqnarray}
\endgroup
Finally, by plugging \eqref{p2} and \eqref{comp1} into \eqref{functional} we get \eqref{functional1}.
\end{proof}

\begin{remark}
Since the functional $J(f)$ depends only on $\alpha$, from now on we will use the notation $J(\balpha)$ to denote the functional $J(f)$.
\end{remark}

To minimize $J(\balpha)$, we need to differentiate with respect to $\balpha$ and solve for $\nabla_\balpha J(\balpha) = 0$. For this reason we recall some basic properties of the complex gradient operator studied in \cite[Section 3]{Brandwood}.
\begin{lemma}
	\label{rules}
Let $A \in \mathbb{C}^{n \times n}$ be a complex matrix.
For a complex vector $\bz=(z_1,\cdots , z_n)^T \in \mathbb{C}^{n\times 1}$ the
complex gradient is given by $\nabla_\bz =(\partial_{z_1}, ,\cdots , \partial_{z_n})^T$.
We have the following properties:
\begin{enumerate}[(i)]
\item $\nabla_\bz (A^* \bz) = \bar{A}$. 
\item $\nabla_\bz (\bz^* A) =0.$    
\item $\nabla_\bz (\bz^* A \bz ) = A^T \bar{\bz}.$
\item $\nabla_\bz (\bz^* A\bz ) = \bar{A}\bar{\bz },  $ for $A$ Hermitian.
\end{enumerate}
\end{lemma}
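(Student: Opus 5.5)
The plan is to use the Wirtinger (Brandwood) convention. Here $\partial_{z_k}$ is the formal derivative in $z_k$ with $\bar z_k$ treated as an independent variable, i.e. $\partial_{z_k}=\tfrac12(\partial_{x_k}-i\partial_{y_k})$ for $z_k=x_k+iy_k$. Each identity then reduces to a componentwise computation, done by writing the scalar (or vector) expression as an explicit double sum in the entries of $A$, $\bz$ and $\bar\bz$. The two facts used throughout are
\[
\partial_{z_k} z_j=\delta_{jk}, \qquad \partial_{z_k}\bar z_j=0,
\]
which follow directly from the definition of the Wirtinger derivative. Every expression below is polynomial in $\bz,\bar\bz$, so linearity and the product rule hold.

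For (i), write $(A^*\bz)_i=\sum_j \overline{a_{ji}}\,z_j$. Then $\partial_{z_k}(A^*\bz)_i=\overline{a_{ki}}$, so the array of derivatives is $\bar A$, provided we adopt the paper's (Brandwood's) layout convention for the gradient of a vector-valued map: the row index is the differentiation variable $z_k$ and the column index is the component $i$. I will fix this convention explicitly at the start, since the orientation is the only possible source of a transpose discrepancy.

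For (ii), the entries of $\bz^*A$ are $\sum_j \bar z_j a_{ji}$. These depend only on $\bar\bz$, so every $\partial_{z_k}$ kills them and the gradient is $0$.

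For (iii), expand $\bz^*A\bz=\sum_{i,j}\bar z_i a_{ij} z_j$. Differentiating in $z_k$ with $\bar z_i$ held fixed gives $\sum_i \bar z_i a_{ik}=(A^T\bar\bz)_k$. Stacking over $k$ yields $A^T\bar\bz$.

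For (iv), take $A$ Hermitian, so $A^*=A$ and hence $A^T=\overline{A^*}=\bar A$. Substituting into (iii) gives $\bar A\bar\bz$.

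The only real obstacle is being precise about the convention: that $\bar\bz$ is held constant, and how the gradient of a vector-valued map is arranged. Once the convention matches \cite[Section 3]{Brandwood}, each item is a one-line index computation.
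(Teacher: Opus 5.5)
Your proof is correct. The paper gives no argument of its own: it simply quotes these rules from Brandwood's Section 3, where they are obtained by the same componentwise Wirtinger computation you give, holding $\bar{\bz}$ fixed, with $\partial_{z_k}z_j=\delta_{jk}$ and $\partial_{z_k}\bar z_j=0$. Fixing the layout explicitly in (i) is the right way to get $\bar{A}$ rather than $A^*$ for a square $A$. In the paper's only application, $A^*=\bw^*K$ is a row vector, so (i) reduces to Brandwood's scalar case and the layout question does not arise.
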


\begin{proposition}\label{prop:w}
Let $\bw = (w_0, \ldots, w_n)^T\in \mathbb{C}^{(n+1) \times 1}$, $\balpha= (\alpha_0, \ldots, \alpha_n)^T\in \mathbb{C}^{(n+1) \times 1}$ and $K \in \mathbb{C}^{(n+1) \times (n+1)}$ being a strictly positive definite (Hermitian) matrix with entries $K(z_i, z_j)$ for $0 \leq i, j \leq n$. Then $ J(\balpha)$ is minimized 
if and only if

\begin{equation}
\label{eq:w}
    \bw = (K+\lambda I)\balpha.
\end{equation}

\end{proposition}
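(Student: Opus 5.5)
We prove Proposition~\ref{prop:w} by treating $J(\balpha)$ from Lemma~\ref{fun} as a real-valued, strictly convex quadratic function of $\balpha$ and locating its unique critical point with the complex gradient rules of Lemma~\ref{rules}. First, expand
\[
J(\balpha)=\bw^*\bw-\bw^*K\balpha-\balpha^*K^*\bw+\balpha^*K^*K\balpha+\lambda\balpha^*K\balpha .
\]
Since $K$ is Hermitian, $K^*=K$, and this becomes
\[
J(\balpha)=\bw^*\bw-\bw^*K\balpha-\balpha^*K\bw+\balpha^*(K^2+\lambda K)\balpha .
\]
Here $M:=K^2+\lambda K=K(K+\lambda I)$ is Hermitian and strictly positive definite, because $K>0$ and $\lambda>0$.

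Next, apply the Brandwood calculus, treating $\balpha$ and $\bar\balpha$ as independent. Since $J$ is real-valued, $\nabla_{\balpha}J=0$ is equivalent to $\nabla_{\bar\balpha}J=0$, and it is cleaner to differentiate with respect to $\bar\balpha$. The term $\bw^*\bw$ is constant, and $\bw^*K\balpha$ contains no $\bar\balpha$. The remaining terms give
\[
\nabla_{\bar\balpha}J=-K\bw+M\balpha .
\]
Equivalently, Lemma~\ref{rules}(i),(iv) give $\nabla_{\balpha}J=-\bar K\bar\bw+\bar M\bar\balpha$, which is the conjugate. Setting this to zero yields $K\bw=K(K+\lambda I)\balpha$. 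Because $K$ is strictly positive definite, it is invertible, and cancelling it gives exactly \eqref{eq:w}.

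For the ``if and only if'', the vanishing gradient must be shown to be both necessary and sufficient for a minimum. Necessity holds because $J$ is a smooth real function of $(\mathrm{Re}\,\balpha,\mathrm{Im}\,\balpha)$, and every minimizer of such a function is a critical point. For sufficiency, complete the square: with $\balpha_0$ defined by $M\balpha_0=K\bw$, one gets
\[
J(\balpha)=J(\balpha_0)+(\balpha-\balpha_0)^*M(\balpha-\balpha_0)\ \ge J(\balpha_0),
\]
with equality iff $\balpha=\balpha_0$ since $M>0$. The critical point is therefore the unique global minimizer, and it is characterized by $\bw=(K+\lambda I)\balpha$. In fact this completing-the-square identity proves the whole statement on its own, without gradients; I would present it as the rigorous core and the gradient computation as the derivation.

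The main obstacle I expect is bookkeeping with the Wirtinger conventions. Lemma~\ref{rules} differentiates with respect to $\bz$ and produces conjugated matrices, so one has to check carefully which terms survive and that conjugating the equation back gives $K\bw=M\balpha$ rather than a transposed variant. The Hermitian property and $K^T=\bar K$ make this consistent. Note that the statement's ``symmetric'' must be read as Hermitian, which is what makes $M$ Hermitian. Invertibility of $K$, needed to cancel it, is exactly the strict positive definiteness hypothesis.
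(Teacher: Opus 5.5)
Your proof is correct and follows the paper's route. You expand $J(\balpha)$ via Lemma~\ref{fun}, use $K^*=K$, compute the Brandwood gradient $-\bar K\bar\bw+\bar K(\bar K+\lambda I)\bar\balpha$, set it to zero and cancel the invertible $K$. Your completing-the-square identity $J(\balpha)=J(\balpha_0)+(\balpha-\balpha_0)^*K(K+\lambda I)(\balpha-\balpha_0)$ is a genuine improvement, because the paper equates minimization with $\nabla_\balpha J=0$ without showing that the critical point is the unique global minimizer.
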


\begin{proof}
By Lemma \ref{fun} we can write the functional $J(\balpha) $ as
$$J(\balpha)=	\bw^* \bw - \bw^* K \balpha - \balpha^* K^* \bw + \balpha^* K^2\balpha+ \lambda \balpha^* K \balpha.$$
We observe that the kernel $K=( K(z_i,z_j))_{0\leq i,j\leq n}  $ is Hermitian, indeed we have $K^*=K$.\\

We apply the complex gradient $\nabla_\balpha  = (\partial_{\alpha_{1}}, ,\cdots , \partial_{\alpha_n})^T$ to $J(\balpha)$, by Lemma \ref{rules} get,
		\begingroup\allowdisplaybreaks
\begin{align*}
    \nabla_\balpha J(\balpha) 
&=
\nabla_\balpha( \bw^*\bw )- \nabla_\balpha( \bw^* K \balpha) -  \nabla_\balpha( \balpha^* K^* \bw) + \nabla_\balpha (\balpha^* K^2 \balpha)+ \lambda \nabla_\balpha( \balpha^* K \balpha )\\
&=-(\bw^*K)^T+\bar{K}^2 \bar{\balpha } + \lambda \bar{K}\bar{\balpha }\\
&= -\bar{K} \bar{\bw} +\bar{K}^2 \bar{\balpha } + \lambda \bar{K}\bar{\balpha }.
\end{align*}
\endgroup
In order to minimize $J(\balpha)$ we have to solve $\nabla_\balpha  J(\balpha )=0$ for $\bw$. Since the Gram matrix $K=(K(z_i,z_j))_{i,j}$ is strictly positive definite, it is invertible (see \cite{SS2002}) and therefore we have $\nabla_\balpha  J(\balpha )=0$ if and only if
\[
\bw = (K+\lambda I)\balpha.
\]

\end{proof}

\section{Representer Theorem and Superoscillations}

In this section, we provide a bridge between the complex Representer theorem and superoscillations. We will see direct applications on the Fock and Gaussian RBF spaces.

\subsection{Fock Space}
Here our goal is to establish a connection between the complex Representer theorem and superoscillations. For this reason, we consider the holomorphic extension of the sequence $f_n(x)$ defined in \eqref{Fourierg} to entire functions by replacing the real variable $x$ with the complex variable $z$. Moreover, using the definition of the reproducing kernel of the Fock space, we can express the sequence $f_n(z)$  in terms of the reproducing kernel of the Fock space as follows:
\begin{equation}
\label{gensuper}
	f_n(z)= \sum_{j=0}^{n} Z_j(n,a) B(z,-ih_j(n)).
\end{equation}
where the function $B(z, -ih_j(n))$ is defined in \eqref{fock}.

\begin{remark}
We can also consider the holomorphic extension of the example given in \eqref{reps} by replacing the real variable $x$ with the complex variable $z$. In this case as well, the superoscillations in \eqref{reps} can be expressed in terms of the reproducing kernel of the Fock space:

\begin{equation}\label{eq:suposc}
	F_n(z,a)=\sum_{j=0}^{n} C_j(n,a)B(z,z_j), \quad z_j  = -i \left(1-\frac{2j}{n}\right).
\end{equation}
\end{remark}

We now state the following learning problem for the Fock space.

\begin{prob}
\label{p1}
Let $n\in\mathbb{N}, (h_j(n))_{0\leq j\leq n}$ be a real-valued sequence such that $\sup_{0 \leq j \leq n}|h_j(n)| \leq 1$.
For an input set of the form $ \{z_j\}_{j=0}^n= \biggl \{-i h_j(n) \biggl\}_{j=0}^n$, we seek a corresponding output set $w_0, \dots, w_n$ such that the superoscillating sequence $f_n(z)$, see \eqref{gensuper}, solves the learning problem
\begin{equation}
\label{min}
f_* = \arg\min_{f \in \mathcal{F}(\mathbb{C})} J(f),
\end{equation}
where
$$
J(f) = \sum_{k=0}^n \left| w_k -f(z_k) \right|^2 + \lambda \|f\|^2, \qquad \lambda>0, \qquad f\in \mathcal{F}(\mathbb C).
$$
\end{prob}

\begin{theorem}
Let $n \in \mathbb{N}$ and $a>1$. The components of the vector  $\bw= \{w_k\}_{k=0}^n$ that solves the problem \ref{p1} are given by
\begin{equation}
\label{compp}
w_k= \sum_{j=0}^{n}Z_j(n,a) e^{h_j(n)h_k(n)}+\lambda Z_k(n,a),
\end{equation}
where the sequence $h_j(n)$ is real-valued and is defined such that $|h_j(n)| \leq 1$ for all $n$ and $j \in \mathbb{N}_0.$
\end{theorem}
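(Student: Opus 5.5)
The plan is to apply Proposition \ref{prop:w} directly, with the Fock kernel $B$ as $K$. By \eqref{gensuper}, $f_n(z)=\sum_{j=0}^n Z_j(n,a)B(z,z_j)$ with inputs $z_j=-ih_j(n)$. So $f_n$ already has the form \eqref{eq:fmin} required by the complex representer theorem (Theorem \ref{thm:crt}), with coefficient vector $\balpha=(Z_0(n,a),\dots,Z_n(n,a))^T$. Proposition \ref{prop:w} says that the minimizer of $J$ has coefficients $\balpha$ if and only if $\bw=(K+\lambda I)\balpha$. Thus the output data making $f_n$ the minimizer are read off by computing this product.

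The first step is to compute the Gram matrix. We have $K(z_k,z_j)=B(z_k,z_j)=e^{z_k\overline{z_j}}$. Since $h_j(n)$ is real, $\overline{z_j}=ih_j(n)$, and therefore
\[
z_k\overline{z_j}=(-ih_k(n))(ih_j(n))=h_k(n)h_j(n).
\]
Hence $K(z_k,z_j)=e^{h_j(n)h_k(n)}$, which is a real symmetric (so Hermitian) matrix. The $k$-th component of $(K+\lambda I)\balpha$ is then $\sum_j Z_j(n,a)e^{h_j(n)h_k(n)}+\lambda Z_k(n,a)$, which is exactly \eqref{compp}.

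The main obstacle is the hypothesis of Proposition \ref{prop:w} that the Gram matrix be strictly positive definite, which is what makes the argument an if-and-only-if.
\begin{itemize}
\item If the $h_j(n)$ are pairwise distinct, then the kernel functions $e^{-ih_j(n)\,\cdot}$, that is $z\mapsto e^{ih_jz}$ with distinct frequencies, are linearly independent in $\mathcal F(\mathbb C)$. The Gram matrix $(e^{h_jh_k})$ is then strictly positive definite.
\item I would justify this independence either by the classical independence of exponentials with distinct exponents, or by noting that $e^{xy}$ is a strictly positive definite kernel on $\mathbb R$, since $e^{xy}=\sum_m x^my^m/m!$ and Vandermonde-type arguments apply.
\item If some $h_j(n)$ coincide, as can happen for general sequences, I would merge the repeated nodes, adding their coefficients $Z_j$, and work with the reduced distinct set.
\end{itemize}
In the prototypical case $h_j(n)=1-2j/n$ the nodes are distinct and no merging is needed.

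Finally, I would note that the functional $J(\balpha)$ is convex in $\balpha$ when $K$ is positive definite and $\lambda>0$: it is the sum of a squared norm and $\lambda\,\balpha^*K\balpha$. The stationary point is therefore the global minimum, so with this $\bw$ the function $f_n$ indeed solves Problem \ref{p1}.
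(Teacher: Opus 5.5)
Your proposal is correct and follows the paper's proof. Both take $\balpha=(Z_0(n,a),\dots,Z_n(n,a))^T$, compute the Gram matrix $B(z_k,z_j)=e^{h_k(n)h_j(n)}$, and read off $\bw=(K+\lambda I)\balpha$ from Proposition~\ref{prop:w}; your checks on positive definiteness and convexity are extra rigor the paper omits. One small point: merging repeated nodes is unnecessary, and it would change the data set of Problem~\ref{p1}. The gradient $\bar K\bigl(-\bar{\bw}+(\bar K+\lambda I)\bar{\balpha}\bigr)$ vanishes whenever $\bw=(K+\lambda I)\balpha$, and $J(\balpha)$ is already convex for positive semidefinite $K$, so $f_n$ is the minimizer even when nodes repeat; strict definiteness only matters for the uniqueness of $\bw$.
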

\begin{proof}
By the complex representer theorem (see Theorem \ref{thm:crt}), the minimizer $f_*$ is given by
$$ f_*(z)= \sum_{j=0}^{n} C_j(n,a) B(z,z_j).$$
We take as input set $ \{z_j\}_{j=0}^n= \biggl \{-i h_j(n) \biggl\}_{j=0}^n$. By Proposition \ref{prop:w} we know that the functional $J(f)$ is minimized if and only if 
\begin{equation}\label{res}
	\bw=(K+\lambda I)\balpha.
\end{equation}

where

$$
\balpha = (Z_0(n,a),\dots, Z_n(n,a))^T, \qquad K=( B(z_k,z_j) )_{0\leq k,j\leq n} = \left( e^{h_j(n) h_k(n) }\right)_{0\leq k,j\leq n}.
$$

The components of $K\balpha$ are given by
\begin{equation}
\label{prod}
K \alpha_k= \sum_{j=0}^{n} Z_j(n,a) e^{h_j(n)h_k(n)}.
\end{equation}
Thus by plugging \eqref{prod} in \eqref{res} we have the result.
\end{proof}

The components that solve Problem \eqref{p1} can take a more compact form if we consider specific examples of superoscillations.

\begin{theorem}
\label{class}
Let $n \in \mathbb{N}$ and $a > 1$. If, in Problem \ref{p1}, we consider the superoscillating sequence \eqref{eq:suposc} as the solution of the minimization problem \eqref{min}, then the components of the vector $\mathbf{w} = \{w_k\}_{k=0}^n$ that solve Problem \eqref{p1} are given by

\begin{equation}
\label{comp}
w_k(n,a) = \left[ \frac{1+a}{2}\right]^n \left[ e^{1-\frac{2k}{n}}\left(1 +\frac{1-a}{1+a}e^{-\frac{2}{n} +\frac{4k}{n^2} }  \right)^n +\lambda {n\choose k} \left(\frac{1-a}{1+a}\right)^k    \right].
\end{equation}
\end{theorem}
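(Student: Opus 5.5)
The plan is to specialize the preceding theorem, formula \eqref{compp}, to the classical superoscillating sequence \eqref{eq:suposc}, and then evaluate the resulting finite sum in closed form with the binomial theorem. Concretely, I take $h_j(n)=1-\frac{2j}{n}$, so that $z_j=-ih_j(n)$, and $Z_j(n,a)=C_j(n,a)$ as in \eqref{eq:cjzj}.

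First I check that the earlier machinery applies, in particular the hypothesis of Proposition \ref{prop:w} that the Gram matrix is strictly positive definite.
\begin{itemize}
\item The Fock kernel gives $B(z_k,z_j)=e^{z_k\overline{z_j}}=e^{(-ih_k)(ih_j)}=e^{h_kh_j}$, so $K=(e^{h_j h_k})_{j,k}$ is real symmetric.
\item The points $h_j(n)$, $0\le j\le n$, are pairwise distinct.
\item For distinct points the kernel functions $e^{z\overline{z_j}}$ are linearly independent entire functions, so the Gram matrix is strictly positive definite.
\end{itemize}
Hence Proposition \ref{prop:w} yields $\bw=(K+\lambda I)\balpha$ with $\balpha=(C_0(n,a),\dots,C_n(n,a))^T$. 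This gives
\[
w_k=\sum_{j=0}^n C_j(n,a)\,e^{h_jh_k}+\lambda C_k(n,a),
\]
exactly as in \eqref{compp}.

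Next I evaluate the sum. I factor the exponent as
\[
h_jh_k=h_k-\frac{2j}{n}h_k,
\]
so that $e^{h_jh_k}=e^{h_k}\bigl(e^{-2h_k/n}\bigr)^j$. Inserting the explicit binomial form of $C_j(n,a)$ gives
\[
\sum_{j=0}^n C_j(n,a)e^{h_jh_k}=e^{h_k}\sum_{j=0}^n{n\choose j}\left(\frac{1+a}{2}\right)^{n-j}\left(\frac{1-a}{2}e^{-2h_k/n}\right)^j=e^{h_k}\left(\frac{1+a}{2}+\frac{1-a}{2}e^{-2h_k/n}\right)^n .
\]
Since $a>1$, we have $1+a\neq0$, so I can factor out $\left(\frac{1+a}{2}\right)^n$. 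I then use the identity
\[
-\frac{2h_k}{n}=-\frac{2}{n}+\frac{4k}{n^2},
\]
together with $e^{h_k}=e^{1-\frac{2k}{n}}$, to obtain the first term in \eqref{comp}.

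Finally, for the regularization term I write
\[
\lambda C_k(n,a)=\lambda\left(\frac{1+a}{2}\right)^n{n\choose k}\left(\frac{1-a}{1+a}\right)^k ,
\]
again factoring out $\left(\frac{1+a}{2}\right)^n$. This matches the second term in \eqref{comp}.

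There is no real obstacle here. The only points needing care are the sign conventions in $B(z_k,z_j)$, which give $e^{+h_jh_k}$ rather than its conjugate or reciprocal, and justifying strict positive definiteness so that Proposition \ref{prop:w} applies.
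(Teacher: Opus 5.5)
Your proposal is correct and follows the paper's proof. Like the paper, you specialize \eqref{compp} with $Z_j(n,a)=C_j(n,a)$ and $h_j(n)=1-\frac{2j}{n}$, then collapse the sum with the binomial theorem via $e^{h_jh_k}=e^{h_k}\bigl(e^{-2h_k/n}\bigr)^j$; your explicit check that the Gram matrix is strictly positive definite (distinct frequencies give linearly independent exponentials), which Proposition \ref{prop:w} requires, is a step the paper leaves implicit.
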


\begin{proof}
Since we are considering the superoscillations $F_n(z,a)$ given in \eqref{eq:suposc} as a solution of the minimization problem, it is sufficient to use formula \eqref{compp} with $Z_\ell(n,a) = C_\ell(n,a)$ for $\ell = j,k$, and $h_\ell(n) = 1 - \frac{2\ell}{n}$ where $C_{\ell}(n,a)$ are defined in \eqref{eq:cjzj} for $\ell = j,k$. Thus by formula \eqref{comp} we have

	\begingroup\allowdisplaybreaks
\begin{align*}
\sum_{j=0}^{n} Z_j(n,a) e^{h_j(n)h_k(n)}&= \sum_{j=0}^{n} {n\choose j}\left( \frac{1+a}{2}\right)^{n-j} \left(\frac{1-a}{2}\right)^{j} e^{\left( 1-\frac{2k}{n} \right)\left( 1-\frac{2j}{n}  \right) }  \\
    &= \left(\frac{1+a}{2}\right)^n \sum_{j=0}^{n}{n\choose j} \left(\frac{1-a}{1+a}\right)^j e^{1-\frac{2j}{n} -\frac{2k}{n} +\frac{4kj}{n^2} }\\
    &= e^{1-\frac{2k}{n}}  \left(\frac{1+a}{2}\right)^n \sum_{j=0}^{n}{n\choose j} \left(\frac{1-a}{1+a}\right)^j e^{-\frac{2j}{n} +\frac{4kj}{n^2}}\\
    &= e^{1-\frac{2k}{n}}  \left(\frac{1+a}{2}\right)^n \sum_{j=0}^{n}{n\choose j} \left(\frac{1-a}{1+a}\right)^j \left( e^{-\frac{2}{n} +\frac{4k}{n^2}}\right)^j \\
    &= e^{1-\frac{2k}{n}}  \left(\frac{1+a}{2}\right)^n \left(1+ \left(\frac{1-a}{1+a}\right) e^{-\frac{2}{n} +\frac{4k}{n^2}}\right)^n.
\end{align*}
\endgroup
Since
$
\lambda Z_k(n,a) = \lambda \binom{n}{k} \left(\frac{1+a}{2}\right)^n \left(\frac{1-a}{1+a}\right)^k,
$
the result follows.

\end{proof}

\begin{remark}
For $a=1$, the output values reduce to $$w_k(n,1)=e^{1-\frac{2k}{n}}.$$
\end{remark}

\begin{example}\label{ex:1}
	In the case of regression, the solution to the minimization problem can be interpreted as the slope of the separating hyperplane for the data. In our case, the hyperplane corresponds to the superoscillation sequence.
    to illustrate the solution to Problem \ref{p1},
	consider $n=100$, $a=2$, and $\lambda =1$, we observe how the superoscillation function $F_{100}(z,2)$ separates the data points $\{(z_k, w_k)\}_{k=0}^{100}$, where $z_k=-i \left(1-\frac{2k}{n}\right)$, and $w_k$ is given by \eqref{comp}.
	In Figure~\ref{fig:1}, we plot the data points, with the $x$-axis representing the given inputs $z_k$, and the $y$-axis representing the calculated outputs $w_k$.
	In Figure~\ref{fig:2}, we show the function $F_{100}(z,2)$ plotted over the real line.
	In Figure~\ref{fig:3}, we overlay the two previous visualizations to illustrate how $F_{100}(z,2)$ separates the data points into two classes.
    However, due to the difference in magnitude in the $y$-axis of the data points and the superoscillation, the function $F_{100}(z,2)$ would appear almost linear. To better visualize the separation and behavior of the superoscillation, which presents a log-scale version in Figure~\ref{fig:3}.
	
	\begin{figure}[H]
		\minipage{0.32\textwidth}
		\includegraphics[width=\linewidth]{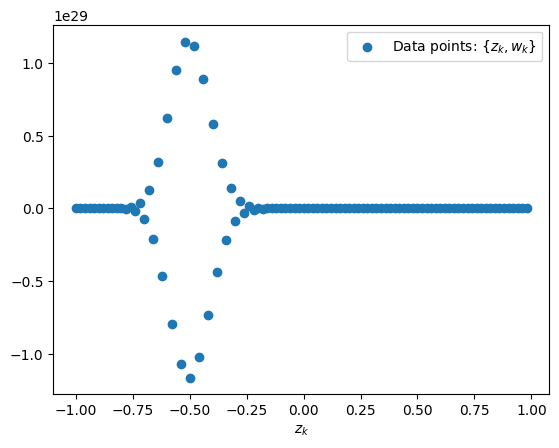}
		\caption{The calculated optimal data $\{w_k\}$ \,
			and the given inputs $\{z_k\}$.
			}
		\label{fig:1}
		\endminipage\hfill
		\minipage{0.32\textwidth}
		\includegraphics[width=\linewidth]{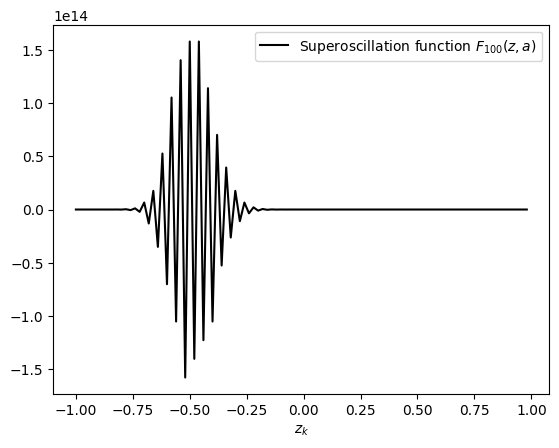}
		\caption{The Superoscillation function $F_{100}(z,a)$ over $\{z_k\}$.}\label{fig:2}
		\endminipage\hfill
		\minipage{0.32\textwidth}
        \includegraphics[width=\linewidth]{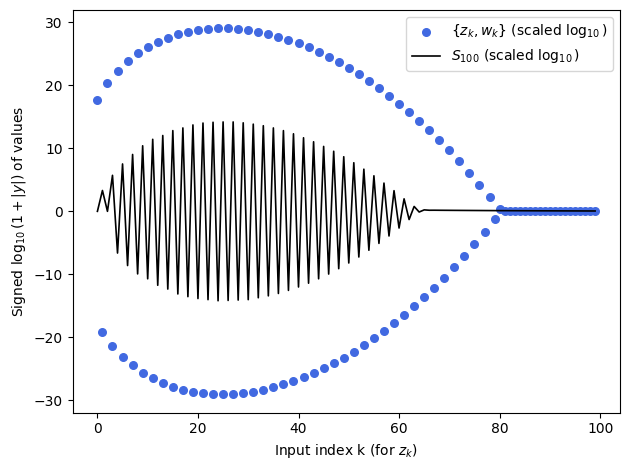}
		\caption{Signed-Log-scaled Superoscillation and the data-point.}\label{fig:3}
		\endminipage
	\end{figure}

\end{example}

\subsection{Gaussian RBF Kernels}

The Radial Basis Function (RBF) kernel, also known as the Gaussian kernel, is a commonly used kernel in various learning algorithms, including support vector machines (SVMs) and Gaussian processes. A detailed study of reproducing kernel Hilbert spaces of holomorphic functions associated with the Gaussian RBF kernel in the setting of several complex variables is provided in \cite{SVM, SD2006}. The connection between the Gaussian RBF kernel and Fock space theory was first explored in \cite{RBF} and has recently been extended to the polyanalytic setting in \cite{DDD}. We recall the following definition.

\begin{definition}[RBF kernel]
    Let $\gamma > 0$, $z \in \mathbb{C}$ and $w \in \mathbb{C}$. The function defined by
\[
K_{\gamma}(z,w) = \exp \left( - \frac{(z - \overline{w})^2}{\gamma^2} \right),
\]
is called the Gaussian RBF kernel with width $\frac{1}{\gamma}$.

\end{definition}

The reproducing kernel Hilbert space (RKHS) corresponding to the complex RBF kernel $K_{\gamma}$ is defined as follows:

\begin{definition}[RBF space]
    Let $\gamma > 0$, an entire function $f : \mathbb{C} \rightarrow \mathbb{C}$ belongs to the RBF space, denoted by $\mathcal{H}_{\gamma}^{RBF}(\mathbb{C})$, if we have
\[
\mathcal{H}_{\gamma}(\mathbb{C}) = 
 \left\{ f\in \mathcal{H}(\mathbb{C}): \left( \frac{2}{\pi \gamma^2} \right) \int_{\mathbb{C}} | f(z) |^2 \exp \left( \frac{(z - \overline{z})^2}{\gamma^2} \right) dA(z) < \infty\right\}.
\]
where $dA(z) = dx\,dy$ is the Lebesgue measure with respect to the variable $z = x + iy$.
\end{definition}
In this paper, for simplicity, we set $\gamma = \sqrt{2}$. Let $B(z,w) = e^{z\bar{w}}$ denote the classical Fock kernel. In \cite{RBF}, the authors proved the following relation between the reproducing kernel of the RBF space and the classical Fock space:

\begin{equation}
\label{rel}
K_{\sqrt{2}} (z,w) =e^{-\frac{z^2+\bar{w}^2}{2}} B (z,w).
\end{equation}

Using the expression above, we obtain the following relation between superoscillations and the reproducing kernel of the RBF space.

\begin{lemma}
\label{rbfsuper}
Let $n \in \mathbb N$, and let $\{h_j(n)\}$ be a real-valued sequence satisfying $\sup_{j,n} |h_j(n)| \leq 1$.  
We can then express the generalized superoscillation sequence given by \eqref{Fourierg} as  
$$
f_n(z) = e^{\frac{z^2}{2}} r_n(z),
$$
where  
$$
r_n(z) = \sum_{j=0}^{n} Z_j(n,a) \, e^{-\frac{h_j^2(n)}{2}} K_{\sqrt{2}}(z, -ih_j(n)), \qquad a > 1,
$$
and $Z_j(n,a)$ are complex-valued coefficients.
\end{lemma}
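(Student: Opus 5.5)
The plan is a direct computation. We start from the holomorphic extension \eqref{gensuper}, which writes each term of the generalized Fourier sequence as a Fock kernel,
\[
f_n(z)=\sum_{j=0}^n Z_j(n,a)\,e^{ih_j(n)z}=\sum_{j=0}^n Z_j(n,a)\,B(z,-ih_j(n)).
\]
The identity $e^{ih_j(n)z}=B(z,-ih_j(n))$ holds because $\overline{-ih_j(n)}=ih_j(n)$, since $h_j(n)$ is real. We then rewrite each Fock kernel in terms of the RBF kernel using relation \eqref{rel}, solved for $B$:
\[
B(z,w)=e^{\frac{z^2+\bar w^2}{2}}K_{\sqrt2}(z,w).
\]

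Take $w=-ih_j(n)$. Then $\bar w=ih_j(n)$, so $\bar w^2=-h_j^2(n)$, and therefore
\[
B(z,-ih_j(n))=e^{\frac{z^2}{2}}\,e^{-\frac{h_j^2(n)}{2}}\,K_{\sqrt2}(z,-ih_j(n)).
\]
Substituting this into the sum, the factor $e^{z^2/2}$ does not depend on $j$ and can be pulled out. This gives
\[
f_n(z)=e^{\frac{z^2}{2}}\sum_{j=0}^n Z_j(n,a)\,e^{-\frac{h_j^2(n)}{2}}K_{\sqrt2}(z,-ih_j(n))=e^{\frac{z^2}{2}}r_n(z),
\]
which is the claim.

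As an independent check on the sign conventions, we would also verify \eqref{rel} directly from the definition of $K_{\sqrt2}$. We have $K_{\sqrt2}(z,w)=\exp\!\bigl(-\tfrac{(z-\bar w)^2}{2}\bigr)=\exp\!\bigl(-\tfrac{z^2+\bar w^2}{2}+z\bar w\bigr)$, and this matches \eqref{rel}.

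The only real obstacle is this bookkeeping: keeping track of the conjugation and using the reality of $h_j(n)$, which is exactly what makes $\bar w^2=-h_j^2(n)$ real. The hypotheses $|h_j(n)|\le 1$ and $a>1$ play no role in the identity. They only guarantee that $f_n$ is a superoscillating sequence.
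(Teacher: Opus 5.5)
Your proof is correct and matches the paper's argument: you substitute relation \eqref{rel}, solved for $B$, into \eqref{gensuper} with $w=-ih_j(n)$, use $\bar w^2=-h_j^2(n)$, and pull out the common factor $e^{z^2/2}$. Your direct check of \eqref{rel} from the definition of $K_{\sqrt{2}}$ is a useful sanity check that the paper omits, since it only cites that identity from the literature.
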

\begin{proof}
The result follows by plugging \eqref{rel} into the definition of superoscillations, see \eqref{gensuper}:
	\begingroup\allowdisplaybreaks
	\begin{align*}
		f_n(z)&=\sum_{j=0}^{n} Z_j(n,a)B(z,-ih_j(n))\\
		&=\sum_{j=0}^{n} Z_j(n,a)e^{\frac{z^2-h_j^2(n)}{2}} K_{\sqrt{2}}(z,-ih_j(n))\\
		&= e^{\frac{{z}^2}{2}} \sum_{j=0}^{n} Z_j(n,a) e^{-\frac{h_j^2(n)}{2}} K_{\sqrt{2}}(z,-ih_j(n)).
	\end{align*}
	\endgroup
\end{proof}
\begin{remark}
If, in Lemma \ref{rbfsuper}, we take the sequence $h_j(n)$ to be $z_j := -i\left(1 - \frac{2j}{n}\right)$, we can establish the following relation between the superoscillations in \eqref{eq:suposc} and the reproducing kernel of the RBF space:
$$
F_n(z,a) = e^{\frac{z^2}{2}} R_n(z,a), \qquad 
R_n(z,a) = \sum_{j=0}^n C_j(n,a) \, e^{\bar{z}_j^2} K_{\sqrt{2}}(z, z_j).
$$
\end{remark}
In this section, we introduce two possible notions of superoscillations based on the RBF space. The first is derived from the relation \eqref{rel}, while in the second, the kernel $B(z, -h_j(n))$ of the Fock space, which appears in the definition of superoscillations, is replaced by the reproducing kernel of the RBF space.

\begin{definition}
Let $a>1$ and $n \in \mathbb{N}$ be fixed. For a real-valued sequence $\{h_j(n)\}$ satisfying $|h_j(n)| \leq 1$ for all $n\in\mathbb{N}$ and $j\in\mathbb{N}_0$, we define the RBF superoscillations of the first type as
\begin{equation}
\label{r1}
	r_n(z) = \sum_{j=0}^{n} Z_j(n,a) \, e^{-\frac{h_j^2(n)}{2}} K_{\sqrt{2}}(z,-i h_j(n)),
\end{equation}
and the RBF superoscillations of the second type as
\begin{equation}
\label{r2}
	s_n(z,a) = \sum_{j=0}^{n} Z_j(n,a) \, K_{\sqrt{2}}(z, -i h_j(n)),
\end{equation}
where $Z_j(n,a)$ are complex-valued coefficients.
\end{definition}

\begin{remark}
	Classical superoscillations are defined for real variables, whereas, for our purposes, RBF superoscillations are defined directly for complex variables. 
\end{remark}

\begin{example}
\label{Rbfe}
Let $z_j := -i \left(1 - \frac{2j}{n}\right)$. An example of RBF superoscillations of the first type is given by
\begin{equation}
\label{R1}
	R_n(z,a) = \sum_{j=0}^{n} C_j(n,a) \, e^{\frac{\bar{z}_j^2}{2}} K_{\sqrt{2}}(z, z_j),
\end{equation}
and an example of RBF superoscillations of the second type is
\begin{equation}
	\label{R2}
	S_n(z,a) = \sum_{j=0}^{n} C_j(n,a) \, K_{\sqrt{2}}(z, z_j),
\end{equation}
where the coefficients $C_j(n,a)$ are defined in \eqref{eq:cjzj}.
\end{example}

\begin{remark}
One of the main differences between classical superoscillations and RBF superoscillations is that the latter belong to $L^2$, whereas the former do not.
\end{remark}

The goal of this section is to establish a connection between RBF superoscillations and the complex representer theorem. In particular, we aim to provide a solution to the following problem:

\begin{problem}
\label{pb3}
	Let $n\in\mathbb{N}, (h_j(n))_{0\leq j\leq n}$ be a real-valued sequence such that $\sup_{0 \leq j \leq n}|h_j(n)| \leq 1$.
For an input set of the form $ \{z_j\}_{j=0}^n= \biggl \{-i h_j(n) \biggl\}_{j=0}^n$, we look for the corresponding output set $w_0, \dots, w_n$ such that the RBF superoscillations of the first type (see \eqref{r1}) and of the second type (see \eqref{r2}) solve the learning problem
	\begin{equation}
	\label{minn}
f_* = \arg \min_{f \in \mathcal{H}_\gamma(\mathbb{C})} J(f),
	\end{equation}
	
	where
$$
J(f) = \sum_{k=0}^n \left| w_k -f(z_k) \right|^2 + \lambda \|f\|^2, \qquad \lambda>0, \qquad f\in \mathcal{H}_\gamma(\mathbb{C}).
$$
	
\end{problem}

In the following result, we provide a solution to the above problem:

\begin{theorem}
	\label{rbf3}
The components of the output set $\{w_k\}_{k=0}^n$ in Problem \ref{pb3} are given as follows:

\begin{itemize}
	\item[i)] For the RBF superoscillations of the first type:
	\begin{equation}
		\label{s1rbf}
		w_k = e^{\frac{h_k^2(n)}{2}} \sum_{j=0}^{n} Z_j(n,a) \, e^{-h_k(n) h_j(n)} 
		+ \lambda Z_k(n,a) \, e^{-\frac{h_k^2(n)}{2}},
	\end{equation}
	
	\item[ii)] For the RBF superoscillations of the second type:
	\begin{equation}
		\label{s2rbf}
		w_k = e^{\frac{h_k^2(n)}{2}} \sum_{j=0}^{n} Z_j(n,a) \, e^{\frac{h_j^2(n)}{2} - h_k(n) h_j(n)} 
		+ \lambda Z_k(n,a) .
	\end{equation}
\end{itemize}
where $Z_j(n,a)$ are complex values coefficents.
\end{theorem}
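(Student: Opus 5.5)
The plan is to follow the same scheme as for the Fock space case. First, Theorem \ref{thm:crt} applied in $\mathcal{H}_{\sqrt{2}}(\mathbb{C})$ shows that any minimizer of \eqref{minn} has the form $f_*=\sum_{j}\alpha_j K_{\sqrt{2}}(\cdot,z_j)$ with $z_j=-ih_j(n)$. Second, Proposition \ref{prop:w} says that, provided the Gram matrix is strictly positive definite, $f_*$ is the minimizer exactly when $\bw=(K+\lambda I)\balpha$. So the whole proof reduces to two steps: read off the coefficient vector $\balpha$ for each type, and compute the Gram matrix $K=(K_{\sqrt{2}}(z_k,z_j))_{k,j}$ explicitly.

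For the coefficients, I read them directly from the definitions. For the first type \eqref{r1} we have $\alpha_j=Z_j(n,a)e^{-h_j^2(n)/2}$. For the second type \eqref{r2} we have $\alpha_j=Z_j(n,a)$. This immediately gives the $\lambda$-terms $\lambda Z_k e^{-h_k^2/2}$ and $\lambda Z_k$ in \eqref{s1rbf} and \eqref{s2rbf}.

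For the Gram matrix, I use relation \eqref{rel}, namely $K_{\sqrt2}(z,w)=e^{-(z^2+\bar w^2)/2}e^{z\bar w}$, evaluated at $z=z_k$ and $w=z_j$. Since $z_k^2=-h_k^2$, $\bar z_j^2=-h_j^2$ and $z_k\bar z_j=h_kh_j$, this gives
$$K_{\sqrt2}(z_k,z_j)=e^{\frac{h_k^2(n)}{2}}\,e^{\frac{h_j^2(n)}{2}}\,e^{\pm h_k(n)h_j(n)},$$
where the sign of the mixed term depends on the conjugation convention.

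I also need strict positive definiteness of this matrix, so that Proposition \ref{prop:w} applies. I would get it by noting that $K$ is a diagonal congruence of the Fock Gram matrix $(e^{h_kh_j})$. That matrix is strictly positive definite for distinct nodes $h_j$, because exponentials with distinct frequencies are linearly independent. This distinctness is an implicit hypothesis of the statement.

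Multiplying out $K\balpha$ then gives the $k$-th component. For the first type, the factor $e^{h_j^2/2}$ from the kernel cancels $e^{-h_j^2/2}$ in $\alpha_j$, leaving $e^{h_k^2/2}\sum_j Z_j e^{\pm h_kh_j}$. For the second type, it leaves $e^{h_k^2/2}\sum_j Z_j e^{h_j^2/2\pm h_kh_j}$. Adding $\lambda\alpha_k$ yields \eqref{s1rbf} and \eqref{s2rbf}.

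The main obstacle is the sign of the mixed exponent. The direct computation $z_k-\bar z_j=-i(h_k+h_j)$ gives $+(h_k+h_j)^2/2$ in the exponent, and hence $e^{+h_kh_j}$. The statement instead has $e^{-h_kh_j}$, which corresponds to taking $\bar z_j=-ih_j$, i.e.\ treating the nodes without conjugation. I would check this convention carefully and record which one the paper intends, since the rest of the argument is routine bookkeeping.
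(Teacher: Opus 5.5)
Your route is the paper's own: apply Theorem~\ref{thm:crt}, invoke Proposition~\ref{prop:w}, read off $\balpha$ from \eqref{r1} and \eqref{r2}, and compute $K\balpha$. Your check that the Gram matrix is strictly positive definite is a genuine addition. You write it as a positive diagonal congruence of the Fock Gram matrix $(e^{h_kh_j})$, with the $h_j$ distinct. The paper applies Proposition~\ref{prop:w} without verifying that hypothesis.

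On the sign, however, you should not leave a $\pm$. No convention is left to choose, and your direct computation is the correct one. With $K_{\sqrt2}(z,w)=\exp(-(z-\bar w)^2/2)$ and $z_j=-ih_j(n)$ one has $z_k-\bar z_j=-i(h_k+h_j)$, so $K_{\sqrt2}(z_k,z_j)=e^{(h_k+h_j)^2/2}$. Your argument therefore yields
\[
w_k=e^{\frac{h_k^2}{2}}\sum_{j=0}^n Z_j\,e^{h_kh_j}+\lambda Z_k\,e^{-\frac{h_k^2}{2}},
\qquad
w_k=e^{\frac{h_k^2}{2}}\sum_{j=0}^n Z_j\,e^{\frac{h_j^2}{2}+h_kh_j}+\lambda Z_k ,
\]
that is, (i) and (ii) with $e^{-h_kh_j}$ replaced by $e^{+h_kh_j}$.

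The paper's proof reaches the printed sign only by evaluating $K_{\sqrt2}(-ih_k,-ih_j)$ as $e^{(h_k-h_j)^2/2}$. This drops the conjugation on the second argument, exactly as you suspected. The proof also lists the Gram matrix as $e^{-(h_k-h_j)^2/2}$, which is inconsistent with both versions.

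Two checks settle the question. First, $(K\balpha)_k=r_n(z_k)=e^{-z_k^2/2}f_n(z_k)=e^{h_k^2/2}\sum_j Z_j e^{h_jh_k}$, which agrees with the Fock Gram entries $e^{h_jh_k}$ that the paper itself uses in the Fock-space case. Second, the unconjugated matrix $(e^{(h_k-h_j)^2/2})$ is not even positive semidefinite: for $(h_0,h_1)=(0,1)$ its determinant is $1-e<0$. Under that reading, your positive-definiteness step and Proposition~\ref{prop:w} would both fail.

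Since $\bw=(K+\lambda I)\balpha$ is uniquely determined here, formulas (i) and (ii) as printed are incorrect in general and cannot be proved. What your proposal proves is the sign-corrected version displayed above.
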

\begin{proof}
First, we solve Problem \ref{pb3} by considering the RBF superoscillations of the first type. By the complex representer theorem (see Theorem \ref{thm:crt}), the minimizer is given by
$$
f_*(z) = \sum_{j=0}^n Z_j(n,a) \, e^{-\frac{z_j^2}{2}} K_{\sqrt{2}}(z, z_j).
$$
We take as input set $ \{z_j\}_{j=0}^n= \biggl \{-i h_j(n) \biggl\}_{j=0}^n$. By Proposition \ref{prop:w}, the corresponding output set $\{w_k\}_{k=0}^n$ for Problem \ref{pb3} is given by

\begin{equation}
\label{sol}
	\bw= (K+\lambda I) \balpha,
\end{equation}
where
\[
\boldsymbol{\alpha} = \left( Z_0(n,a) \, e^{\frac{-h_0^2(n)}{2}}, \dots, Z_n(n,a) \, e^{\frac{-h_n^2(n)}{2}} \right)^T, 
\qquad
K = \left( e^{-\frac{(h_k(n) - h_j(n))^2}{2}} \right)_{0 \leq k,j \leq n}.
\]
Now, by computing $K \boldsymbol{\alpha}$, we obtain
\begin{eqnarray}
	\nonumber
K \balpha&=& \sum_{j=0}^{n} Z_j(n,a) e^{- \frac{h_j^2(n)}{2}}K_{\sqrt{2}}(-ih_k(n),-ih_j(n))\\
\nonumber
&=& \sum_{j=0}^{n} Z_j(n,a) e^{- \frac{h_j^2(n)}{2}} e^{\frac{(h_k(n)-h_j(n))^2}{2}}\\
\label{finalre}
&=&e^{\frac{h_k^2(n)}{2}} \sum_{j=0}^{n} Z_j(n,a) \, e^{-h_k(n) h_j(n)}.
\end{eqnarray}
By plugging \eqref{finalre} in \eqref{sol} we get \eqref{s1rbf}. Next, we address Problem \ref{pb3} by considering the RBF superoscillations of the second type. In this case, the minimizer, according to the complex representer theorem (see Theorem \ref{thm:crt}), is
\[
f_*(z) = \sum_{j=0}^n Z_j(n,a) \, K_{\sqrt{2}}(z, z_j).
\]
  
  We choose the input set $ \{z_j\}_{j=0}^n= \biggl \{-i h_j(n) \biggl\}_{j=0}^n$. Then, by Proposition \ref{prop:w}, the associated output set $\{w_k\}_{k=0}^n$ for Problem \ref{pb3} is determined by

	\begin{equation}
	\label{sol2}
	\bw=(K+\lambda I) \balpha,
\end{equation}
where
$$ \balpha= \left(Z_0(n,a),..., Z_n(n,a)  \right)^T,
\quad
K= \left(e^{- \frac{(h_k(n)-h_j(n))^2}{2}}\right)_{0 \leq k, j \leq n},
$$
By evaluating the product $K \boldsymbol{\alpha}$, we then find
\begin{eqnarray}
\nonumber
K \balpha&=& \sum_{j=0}^{n}Z_j(n,a) K_{\sqrt{2}}(-ih_k(n),-ih_j(n))\\
\nonumber
&=& \sum_{j=0}^{n} Z_j(n,a) e^{\frac{(h_k(n)-h_j(n))^2}{2}}\\
\label{finalrbs}
&=& e^{\frac{h_k^2(n)}{2}} \sum_{j=0}^{n} Z_j(n,a) \, e^{\frac{h_j^2(n)}{2} - h_k(n) h_j(n)}.
\end{eqnarray}
By plugging \eqref{finalrbs} into \eqref{sol2} we get \eqref{s2rbf}.
\end{proof}

The components given in Theorem \ref{rbf3} that solve Problem \ref{pb3} can be written in closed form when specific examples of RBF superoscillations are considered, such as those presented in Example \ref{Rbfe}.

\begin{theorem}
Let $n \in \mathbb{N}$ and $a > 1$. If, in Problem \ref{pb3}, we consider the RBF superoscillations $R_n(z,a)$ (see \eqref{R1}) as the solution to the minimization problem \eqref{minn}, then the components of the set $\{w_k\}_{k=0}^n$ that solve Problem \ref{pb3} are given by
\begin{eqnarray}
\label{eq:wkrbf}
w_k &=& \left(\frac{1+a}{2}\right)^n \left[e^{-\frac{1}{2}+\frac{2k^2}{n}}  \left(1+ \left(\frac{1+a}{1-a}\right) e^{\frac{2}{n} \left(1-\frac{2k}{n}\right)}\right)^n\right.\\
\nonumber
&&+ \lambda \left.
{n\choose k}\left(\frac{1-a}{1+a}\right)^{k} e^{\left(
	-\frac{2k^2}{n^2}+\frac{2k}{n} -\frac{1}{2}
	\right) } \right].
\end{eqnarray}

\end{theorem}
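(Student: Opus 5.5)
The plan is to specialize Theorem \ref{rbf3}(i) to the concrete sequence of Example \ref{Rbfe}, exactly as Theorem \ref{class} specialized formula \eqref{compp}.

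\textbf{Step 1: identify the data.} With $z_j=-i(1-\frac{2j}{n})$ we have $\bar z_j = i(1-\frac{2j}{n})$, so $\bar z_j^2 = -h_j(n)^2$. Hence $e^{\bar z_j^2/2}=e^{-h_j^2(n)/2}$, and $R_n(z,a)$ in \eqref{R1} is precisely the RBF superoscillation of the first type \eqref{r1} with
\[
Z_j(n,a)=C_j(n,a), \qquad h_j(n)=1-\tfrac{2j}{n}.
\]
Note that $|h_j(n)|\le 1$, as required in Problem \ref{pb3}. Formula \eqref{s1rbf} then gives
\[
w_k = e^{\frac{h_k^2}{2}}\sum_{j=0}^n C_j(n,a)\, e^{-h_k h_j} + \lambda C_k(n,a)\, e^{-\frac{h_k^2}{2}}.
\]

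\textbf{Step 2: evaluate the sum by the binomial theorem.} Expand
\[
h_kh_j = 1-\tfrac{2k}{n}-\tfrac{2j}{n}+\tfrac{4kj}{n^2},
\quad\text{so}\quad
e^{-h_kh_j}=e^{-h_k}\Bigl(e^{\frac{2}{n}(1-\frac{2k}{n})}\Bigr)^j .
\]
Factor $\bigl(\frac{1+a}{2}\bigr)^n$ out of $C_j(n,a)$, leaving $\binom{n}{j}\bigl(\frac{1-a}{1+a}\bigr)^j$. The binomial theorem then yields
\[
\sum_{j=0}^n C_j(n,a)\,e^{-h_kh_j}=\Bigl(\tfrac{1+a}{2}\Bigr)^n e^{-h_k}\Bigl(1+\tfrac{1-a}{1+a}\,e^{\frac{2}{n}(1-\frac{2k}{n})}\Bigr)^n .
\]

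\textbf{Step 3: collect the exponential prefactors.} For the first term,
\[
\frac{h_k^2}{2}-h_k = -\frac12+\frac{2k^2}{n^2}.
\]
For the regularization term,
\[
-\frac{h_k^2}{2} = -\frac12+\frac{2k}{n}-\frac{2k^2}{n^2},
\]
and $C_k(n,a)=\bigl(\frac{1+a}{2}\bigr)^n\binom nk\bigl(\frac{1-a}{1+a}\bigr)^k$. This reproduces the $\lambda$-part of \eqref{eq:wkrbf} exactly.

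\textbf{Main obstacle: the first term does not match as printed.} This is pure bookkeeping, but my computation produces the exponent $-\frac12+\frac{2k^2}{n^2}$ and the ratio $\frac{1-a}{1+a}$ inside the $n$-th power. The statement instead displays $\frac{2k^2}{n}$ and $\frac{1+a}{1-a}$. I would recheck the sign of $h_kh_j$ and the definition of $K_{\sqrt2}$ along the imaginary axis. If the mismatch persists, I would present the formula in the corrected form
\[
w_k=\Bigl(\tfrac{1+a}{2}\Bigr)^n\Bigl[e^{-\frac12+\frac{2k^2}{n^2}}\Bigl(1+\tfrac{1-a}{1+a}e^{\frac2n(1-\frac{2k}{n})}\Bigr)^n+\lambda\tbinom nk\Bigl(\tfrac{1-a}{1+a}\Bigr)^k e^{-\frac{2k^2}{n^2}+\frac{2k}{n}-\frac12}\Bigr],
\]
noting that this is what the argument actually proves and treating the two printed discrepancies as likely typographical slips.
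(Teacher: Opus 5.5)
Your route is the paper's route: its proof also substitutes $Z_j=C_j(n,a)$, $h_j=1-\frac{2j}{n}$ into \eqref{s1rbf} and sums with the binomial theorem. Relative to \eqref{s1rbf} your bookkeeping is correct. The printed $\frac{2k^2}{n}$ and $\frac{1+a}{1-a}$ are indeed slips in the last lines of the paper's computation, which also writes $\binom nk$ for $\binom nj$. So the statement as printed does not follow, and you were right not to claim it.

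\textbf{The gap.} The check you postponed changes the answer. Formula \eqref{s1rbf} itself has the wrong sign, so your ``corrected'' formula is not the output set for Problem \ref{pb3} either.
\begin{itemize}
\item Since $\overline{z_j}=ih_j$, we have $z_k-\overline{z_j}=-i(h_k+h_j)$, and therefore $K_{\sqrt2}(z_k,z_j)=e^{(h_k+h_j)^2/2}$.
\item The proof of Theorem \ref{rbf3} dropped the conjugate and used $e^{(h_k-h_j)^2/2}=e^{-(z_k-z_j)^2/2}$ instead.
\item With the correct Gram entries, $(K\balpha)_k=e^{h_k^2/2}\sum_j C_j(n,a)\,e^{+h_kh_j}$.
\end{itemize}

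An independent check: $(K\balpha)_k=R_n(z_k,a)$, and by Lemma \ref{rbfsuper}, $R_n(z,a)=e^{-z^2/2}F_n(z,a)$. Hence
\[
(K\balpha)_k=e^{h_k^2/2}\bigl(\cosh\tfrac{h_k}{n}+a\sinh\tfrac{h_k}{n}\bigr)^n .
\]
Your first term instead equals $e^{h_k^2/2}\bigl(\cosh\frac{h_k}{n}-a\sinh\frac{h_k}{n}\bigr)^n=R_n(\overline{z_k},a)$. That is the minimizer evaluated at the wrong point. Already for $n=1$, $k=0$ (with $C_j=C_j(1,a)$), your term is $e^{-1/2}(C_0+C_1e^2)$ instead of $e^{-1/2}(C_0e^2+C_1)$.

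\textbf{The corrected formula.} Redo your Step 2 with $e^{h_kh_j}=e^{h_k}\bigl(e^{-2h_k/n}\bigr)^j$ and $\frac{h_k^2}{2}+h_k=\frac32-\frac{4k}{n}+\frac{2k^2}{n^2}$. This gives
\[
w_k=\Bigl(\tfrac{1+a}{2}\Bigr)^n\Bigl[e^{\frac32-\frac{4k}{n}+\frac{2k^2}{n^2}}\Bigl(1+\tfrac{1-a}{1+a}\,e^{-\frac2n(1-\frac{2k}{n})}\Bigr)^n+\lambda\tbinom nk\Bigl(\tfrac{1-a}{1+a}\Bigr)^k e^{-\frac12+\frac{2k}{n}-\frac{2k^2}{n^2}}\Bigr].
\]
This is $e^{h_k^2/2}$ times the Fock value of Theorem \ref{class}, with your $\lambda$-term unchanged. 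The missing step is to compute the Gram matrix directly from the definition of $K_{\sqrt2}$ rather than importing \eqref{s1rbf}.
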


 \begin{proof}
Since we are taking the superoscillations $R_n(z,a)$ defined in \eqref{R1} as the solution to the minimization problem, it suffices to apply formula \eqref{comp} with $Z_\ell(n,a) = C_\ell(n,a)$ where the coefficients $C_\ell(n,a)$ are defined in \eqref{eq:cjzj}, and $h_\ell(n) = 1 - \frac{2\ell}{n}$ for $\ell = j, k$. Hence, using \eqref{s1rbf}, we obtain
\begin{eqnarray}
\nonumber
\lambda Z_k(n,a) e^{-\frac{h_k^2(n)}{2}}&=& \lambda \binom{n}{k} \left(\frac{1+a}{2}\right)^n \left(\frac{1-a}{1+a}\right)^k e^{- \frac{1}{2} \left(1-\frac{2k}{n}\right)^2}\\
\label{ef1}
&=&\lambda \binom{n}{k} \left(\frac{1+a}{2}\right)^n \left(\frac{1-a}{1+a}\right)^k e^{-\frac{1}{2}-\frac{2k^2}{n^2}+\frac{2k}{n}},
\end{eqnarray}
and
		\begingroup\allowdisplaybreaks
\begin{eqnarray}
\nonumber
 e^{\frac{h_k^2(n)}{2}} \sum_{j=0}^{n} Z_j(n,a) e^{-h_k(n)h_j(n)}&=& e^{\frac{1}{2}+\frac{2k^2}{n}-\frac{2k}{n}} \left(\frac{1+a}{2}\right)^n \sum_{j=0}^{n} \binom{n}{k} \left(\frac{1-a}{1+a}\right)^j e^{- \left(1-\frac{2j}{n}\right) \left(1-\frac{2k}{n}\right)}\\
 \nonumber
 &=& e^{-\frac{1}{2}+\frac{2k^2}{n}} \left(\frac{1+a}{2}\right)^n \sum_{j=0}^{n} \binom{n}{k} \left(\frac{1-a}{1+a}\right)^j e^{ \frac{2j}{n}-\frac{4jk}{n^2}}\\
 \nonumber
 &=&e^{-\frac{1}{2}+\frac{2k^2}{n}} \left(\frac{1+a}{2}\right)^n \sum_{j=0}^{n} \binom{n}{k} \left(\frac{1-a}{1+a}\right)^j \left(e^{\frac{2}{n}-\frac{4k}{n^2}} \right)^j\\
 \label{ef2}
 &=& e^{-\frac{1}{2}+\frac{2k^2}{n}} \left(\frac{1+a}{2}\right)^n \left[1+ \left(\frac{1+a}{1-a}\right) e^{\frac{2}{n} \left(1-\frac{2k}{n}\right)}\right]^n.
\end{eqnarray}
\endgroup
The final result follows by combining \eqref{ef1} and \eqref{ef2}.

\end{proof}

\begin{proposition}
Let $n \in \mathbb{N}$ and $a > 1$. If, in Problem \ref{pb3}, we consider the RBF-superoscillations $S_n(z,a)$ defined in \eqref{R2} as the solution to the minimization problem \eqref{minn}, then the components of the set $\{w_k\}_{k=1}^n$ that solve Problem \ref{pb3} are given by
\begin{equation}
w_k= \left(\frac{1+a}{2}\right)^n \left[ \lambda \binom{n}{k} \left(\frac{1+a}{1-a}\right)^k +e^{\frac{2k^2}{n^2}} \sum_{j=0}^{n} \binom{n}{j} \left(\frac{1-a}{1+a}\right)^j e^{ \frac{2j\left(j-\frac{2k}{n}\right)}{n^2}}\right].
\end{equation}
\end{proposition}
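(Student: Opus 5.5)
The plan is to specialize Theorem \ref{rbf3}\,ii) to the data of Example \ref{Rbfe}, in the same way the previous theorem specialized part i). Take $Z_j(n,a)=C_j(n,a)$ from \eqref{eq:cjzj} and $h_j(n)=1-\frac{2j}{n}$, so the inputs are $z_j=-i\bigl(1-\frac{2j}{n}\bigr)$ and $S_n(z,a)$ is exactly the second-type RBF superoscillation \eqref{r2}. By Theorem \ref{thm:crt} and Proposition \ref{prop:w}, the output vector is $\bw=(K+\lambda I)\balpha$ with $\balpha=(C_0(n,a),\dots,C_n(n,a))^T$. Hence it suffices to evaluate the two terms of \eqref{s2rbf} separately.

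\emph{The $\lambda$-term.} Factor out $\bigl(\frac{1+a}{2}\bigr)^n$ to get
\[
\lambda C_k(n,a)=\lambda\Bigl(\frac{1+a}{2}\Bigr)^n\binom{n}{k}\Bigl(\frac{1-a}{1+a}\Bigr)^k .
\]
This is the same identity used in the proof of Theorem \ref{class}. Done directly, the computation produces the ratio $\frac{1-a}{1+a}$, not its reciprocal. So I would check this against the printed statement and, if needed, read the inverted ratio there as a typographical slip.

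\emph{The sum term.} Combine the exponents in \eqref{s2rbf} into a single square:
\[
\tfrac{h_k^2}{2}+\tfrac{h_j^2}{2}-h_kh_j=\tfrac{(h_k-h_j)^2}{2}=\tfrac{2(j-k)^2}{n^2}.
\]
Expanding gives $\frac{2k^2}{n^2}+\frac{2j(j-2k)}{n^2}$. After factoring out $\bigl(\frac{1+a}{2}\bigr)^n e^{2k^2/n^2}$, the sum becomes
\[
\sum_{j=0}^n\binom{n}{j}\Bigl(\frac{1-a}{1+a}\Bigr)^j e^{\frac{2j(j-2k)}{n^2}} .
\]
This is the displayed sum up to the normalization of the $j$-exponent, which I would reconcile with the statement in the same way as the $\lambda$-term. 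Unlike in Theorem \ref{class}, the quadratic term $j^2$ in the exponent prevents applying the binomial theorem. That is why the sum is left unsummed in the statement, and no closed form should be attempted.

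\emph{Main obstacle.} The main obstacle is bookkeeping of signs and conventions rather than any analytic difficulty. The key issue is evaluating $K_{\sqrt2}(z_k,z_j)$ at the purely imaginary points $z_j=-ih_j$. With $\bar z_j=ih_j$, the kernel exponent $-(z_k-\bar z_j)^2/2$ depends on whether one pairs $z_k$ with $z_j$ or with $\bar z_j$. This choice decides between $(h_k-h_j)^2$ and $(h_k+h_j)^2$, and the sign of the exponent. I would follow exactly the convention already adopted in \eqref{finalrbs}, so that the result is consistent with Theorem \ref{rbf3}\,ii). I would then verify each exponent and each power of $\frac{1-a}{1+a}$ term by term against the claimed formula, adding $\lambda C_k(n,a)$ at the end.
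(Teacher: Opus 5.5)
Your route is the same as the paper's: its proof also just substitutes $Z_j=C_j(n,a)$ and $h_j=1-\frac{2j}{n}$ into \eqref{s2rbf}. Both of your repairs to the printed formula are correct, and the paper's proof makes the same slips. The $\lambda$-term is $\lambda C_k(n,a)=\lambda\bigl(\frac{1+a}{2}\bigr)^n\binom{n}{k}\bigl(\frac{1-a}{1+a}\bigr)^k$. The exponent is $\frac{(h_k-h_j)^2}{2}=\frac{2k^2}{n^2}+\frac{2j(j-2k)}{n^2}$, not $\frac{2k^2}{n^2}+\frac{2j(j-2k/n)}{n^2}$.

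The genuine gap is the step you call the main obstacle and then settle by deferring to \eqref{finalrbs}. There is no convention to choose here. Since $(K\balpha)_k=f_*(z_k)$, Proposition \ref{prop:w} gives $w_k=\lambda C_k(n,a)+S_n(z_k)$. The definition $K_{\sqrt2}(z,w)=e^{-(z-\bar w)^2/2}$ with $\bar z_j=ih_j$ gives $z_k-\bar z_j=-i(h_k+h_j)$, so
\[
K_{\sqrt2}(z_k,z_j)=e^{\frac{(h_k+h_j)^2}{2}}=e^{\frac{h_k^2+h_j^2}{2}+h_kh_j},
\]
which agrees with \eqref{rel}. The middle equality of \eqref{finalrbs}, $K_{\sqrt2}(-ih_k,-ih_j)=e^{(h_k-h_j)^2/2}$, drops the conjugation. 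That matrix cannot be a Gram matrix: its diagonal entries are $1$ (the true value is $K_{\sqrt2}(z_k,z_k)=e^{2h_k^2}$) while its off-diagonal entries exceed $1$, violating $|K(z,w)|^2\le K(z,z)K(w,w)$. Hence \eqref{s2rbf} has the wrong sign on $h_kh_j$. In fact, because $\bar z_k=z_{n-k}$, the sum it produces is $S_n(z_{n-k})$ rather than $S_n(z_k)$. Carrying out the evaluation correctly gives
\[
w_k=\Bigl(\frac{1+a}{2}\Bigr)^n\Bigl[\lambda\binom{n}{k}\Bigl(\frac{1-a}{1+a}\Bigr)^k+\sum_{j=0}^n\binom{n}{j}\Bigl(\frac{1-a}{1+a}\Bigr)^j e^{\frac{2(n-j-k)^2}{n^2}}\Bigr].
\]
This differs from the statement even after your two corrections. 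Already for $n=1$, $k=0$, the kernel part is $C_0e^2+C_1$, not $C_0+C_1e^2$. So what your argument can actually establish is this corrected formula, not the proposition as printed. You should not reconcile your computation with the statement by adopting the convention of \eqref{finalrbs}.
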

\begin{proof}
As we take the superoscillations $R_n(z,a)$ defined in \eqref{R1} to be the solution of the minimization problem, formula \eqref{comp} can be applied with $Z_\ell(n,a) = C_\ell(n,a)$ and $h_\ell(n) = 1 - \frac{2\ell}{n}$ for $\ell = j, k$. Accordingly, by employing \eqref{s1rbf}, we obtain
\begingroup\allowdisplaybreaks
\begin{eqnarray*}
 e^{\frac{h_k^2(n)}{2}} \sum_{j=0}^{n} Z_j(n,a) e^{\frac{h_j^2(n)}{2}-h_k(n)h_j(n)}&=&  e^{\frac{1}{2} \left(1-\frac{2k}{n} \right)^2} \left(\frac{1+a}{2}\right)^n\sum_{j=0}^{n} \binom{n}{j} \left(\frac{1+a}{1-a}\right)^j \times\\
 && \times e^{\frac{1}{2} \left(1-\frac{2j}{n}\right)^2-\left(1-\frac{2k}{n}\right) \left(1-\frac{2j}{n}\right)} \\
 &=&  e^{\frac{2k^2}{n^2}} \left(\frac{1+a}{2}\right)^n \sum_{j=0}^{n} \binom{n}{j} \left(\frac{1-a}{1+a}\right)^j e^{ \frac{2j\left(j-\frac{2k}{n}\right)}{n^2}}
\end{eqnarray*}
\endgroup
\end{proof}

\begin{example}
	In a similar manner to Example \ref{ex:1}, the solution to the minimization problem represents the separating hyperplane of the data, in this case the RBF-Superoscillation sequence.
	
    Considering $n=100$, $a=2$, and $\lambda =1$, we observe how the two superoscillation functions $\tilde{R}_{100}(z,2)$ and $\tilde{S}_{100}(z,2)$ separates the data points $\{(z_k, w_k)\}_{k=0}^{100}$, where $z_k$ is given by \eqref{eq:cjzj}, and $w_k$ is given by \eqref{eq:wkrbf}. In fact, as a consequence of the minimization problem, the RBF superoscillation is the best function from the hypothesis class $\mathcal{H}_{\sqrt{2}}$ that separates the data points.
    In Figures \ref{rbfdata1} and \ref{rbfdata2}, we see the data points plotted by $x$-axis represents $z_k$, and $y$-axis represents the outputs $w_k$. And in the same figures, we also see the RBF-Superoscillation $\tilde{R}_{100}(z,2)$ and $\tilde{S}_{100}(z,2)$ are begin plotted along the $x$-axis separating the data points respective. 
    
    Afterward, in Figures \ref{rbfdata1log} and \ref{rbfdata2log}, we present the log-scaled version of the graphs to better illustrate the osculatory behavior of the functions.
    
	\begin{figure}[H]
		\minipage{0.5\textwidth}
		\includegraphics[width=\linewidth]{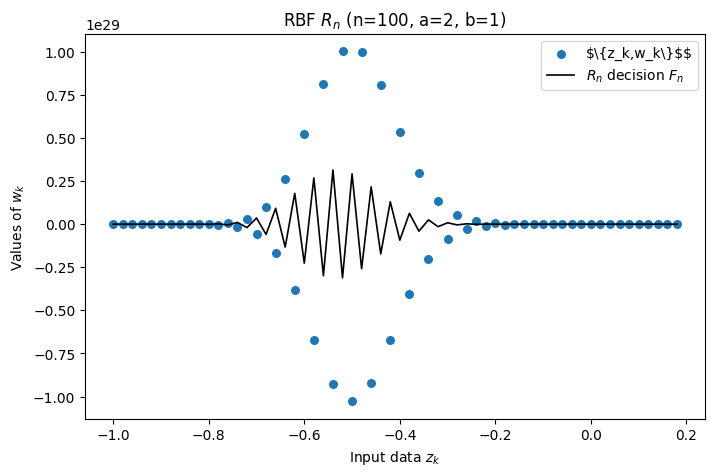}
		\caption{The RBF-Superoscillation of the first kind $R_{100}(z,2)$ ploted on the $x$-axis seperated the data points $\{(z_k,w_k)\}_{k=0}^{100}$.}
        \label{rbfdata1}
		\endminipage
		\minipage{0.5\textwidth}
		\includegraphics[width=\linewidth]{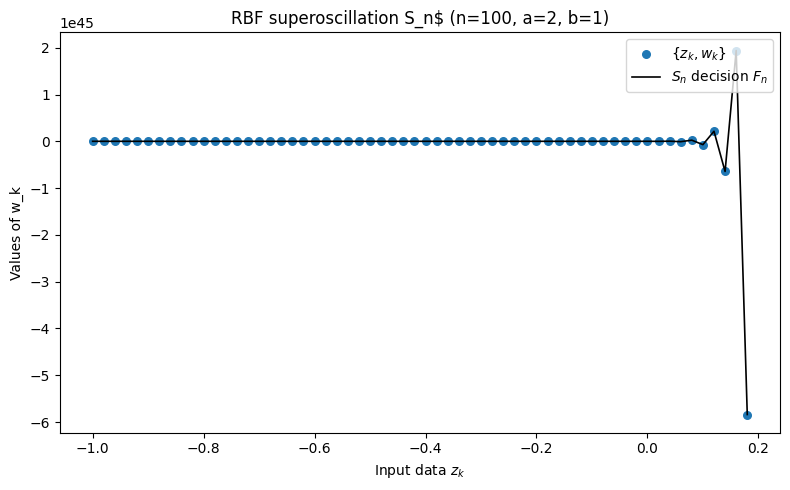}
		\caption{First 60 entries of the RBF-Superoscillation of the second kind $S_{100}(z,2)$ ploted on the $x$-axis seperated the data points $\{(z_k,w_k)\}_{k=0}^{100}$.}
        \label{rbfdata2}
		\endminipage\\
        \minipage{0.5\textwidth}
		\includegraphics[width=\linewidth]{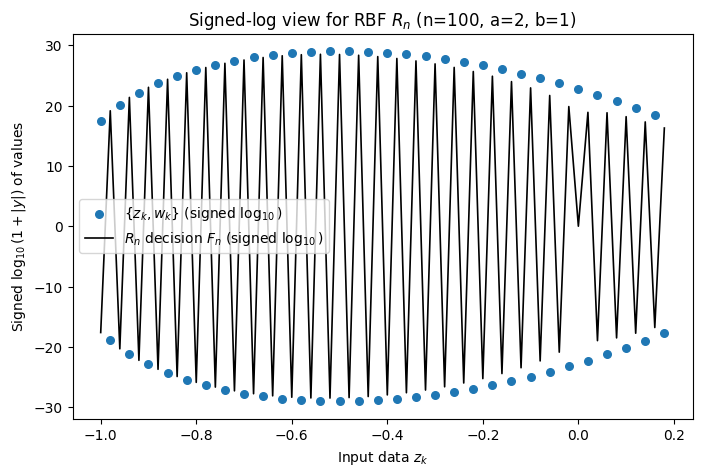}
		\caption{Log-scaled RBF-Superoscillation of the first kind $R_{100}(z,2)$ ploted on the $x$-axis seperated the data points $\{(z_k,w_k)\}_{k=0}^{100}$.}
        \label{rbfdata1log}
		\endminipage
		\minipage{0.5\textwidth}
		\includegraphics[width=\linewidth]{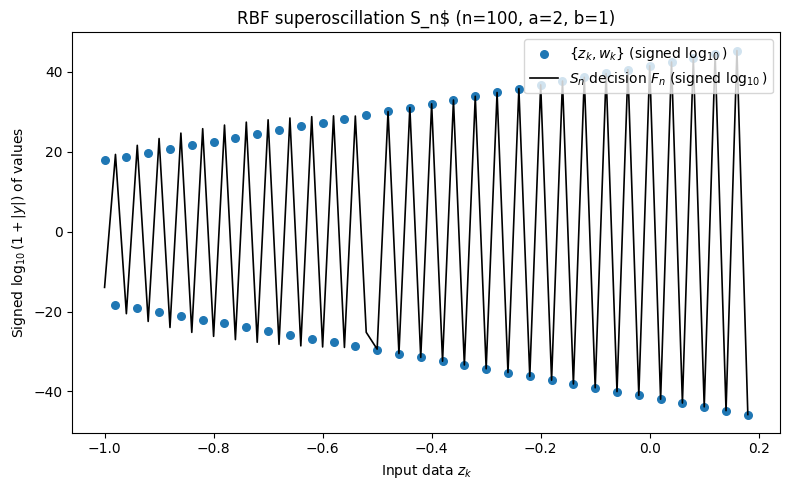}
		\caption{First 60 entries of the Log-scaled RBF-Superoscillation of the second kind $S_{100}(z,2)$ ploted on the $x$-axis seperated the data points $\{(z_k,w_k)\}_{k=0}^{100}$.}
        \label{rbfdata2log}
        \endminipage

	\end{figure}
\end{example}

\section{Supershift Property and RKHS}

In recent years, it has been shown that the concept of superoscillating functions can be extended to the more general notion of supershift.
The supershift property generalizes superoscillations, and it has become a key tool in the analysis of the time evolution of superoscillatory functions when they are used as initial data in the Schrödinger equation; see \cite{ABCS,ABCS1, ABCS2,  BCSS2023}.

\begin{definition}[Supershift]
\label{supers}
Let $ \mathcal{I} \subseteq \mathbb{R}$ be an interval with $[-1,1] \subseteq \mathcal{I}$ and let $\varphi: \mathcal{I} \times \mathbb{R} \to \mathbb{R}$, be a continuos function on $\mathcal{I}$. We set
$$ \varphi_h(x):= \varphi(h,x), \quad h \in \mathcal{I}, \, \, x \in \mathbb{R}$$
and we consider a sequence of points $(h_j(n))$ such that
$$ h_j(n) \in [-1,1], \quad \hbox{for} \quad j=0,...,n \quad \hbox{and} \quad n \in \mathbb{N}.$$
We define the functions

\begin{equation}
	\label{super1}
	\psi_n(x)= \sum_{j=0}^{n} c_j(n) \varphi_{h_j(n)}(x)
\end{equation}
where $(c_j(n))$ is a sequence of complex numbers for $j=0,...,n$ and $n \in \mathbb{N}$. If
$$ \lim_{n \to \infty} \psi_n(x)=\varphi_a(x),$$
for some $a \in \mathcal{I}$ with $|a|>1$, we say that the function $\varphi_n(x)$, for $x \in \mathbb{R}$, admits a supershift.

\end{definition}

\begin{remark}
	The term supershift arises from the fact that the interval $\mathcal{I}$ can be chosen arbitrarily large (even equal to the whole real line $\mathbb{R}$), and that the constant $a$ may lie arbitrarily far from the interval $[-1,1]$, where the functions $\varphi_{h_{j,n}(.)}$ are indexed; see \eqref{super1}.
\end{remark}

\begin{remark}
\label{rem}
We observe that if the function $\varphi_{h}(x)$ in Definition~\ref{supers} is analytic in $\lambda$ as a complex variable, then the mapping $\lambda \mapsto \varphi_h(x)$ satisfies the supershift property. This result can be regarded as a direct consequence of \cite[Thm.~4.8]{superss}, established in the case of two variables. For further discussions on the connection between the supershift property and analyticity we refer to \cite{CSSY2025CAS, CSSY2025}.
\end{remark}

\begin{remark}
The notion of supershift has also been investigated in the framework of several complex variables, see \cite{CPS}, as well as in the hypercomplex setting, see \cite{CMP}.

\end{remark}

In this section, one of our main objectives is to construct new superoscillating sequences by replacing the exponential function in the classical superoscillations—namely, the reproducing kernel of the Fock space—with reproducing kernels of certain generalized Fock spaces.

\subsection{Mittag Leffler Fock Space}

A generalized construction of the Fock space based on a Mittag–Leffler reproducing kernel was introduced in \cite{mlf}. This framework has been the topic of several research works; in particular, the creation and annihilation operators were studied in \cite{mlf}, the number states were analyzed in \cite{TN}, and the corresponding Bargmann transform was developed in various settings in \cite{ADK0, KN}.

\begin{definition}\label{def:mlf}
Let $q>0$, the Mittag Leffler Fock space (MLF space) is defined as
 $$\displaystyle ML_q(\mathbb{C})=\left\lbrace  f(z)=\sum_{n=0}^{\infty}a_nz^n,\quad \frac{1}{q\pi}\int_{\mathbb{C}}|f(z)|^2|z|^{\frac{2}{q}-2}e^{-\frac{|z|^2}{q}}dA(z)< \infty \right\rbrace.$$
\end{definition}

It is important to note that the reproducing kernel function associated to $ML_q(\mathbb{C})$ can be expressed in terms of the classical Mittag-Leffler function. In particular, this leads to the kernel 
\begin{equation}\label{eq:mlfkernel}
E_q(z,w):=\displaystyle \sum_{n=0}^\infty \frac{(z\overline{w})^n}{\Gamma(qn+1)},
\end{equation}
for every $z,w\in \mathbb{C}$.

\begin{remark}
The particular case $q=1$ corresponds to the classical Fock space.
\end{remark}

By using \eqref{eq:mlfkernel} we provide the definition of Mittag Leffler superocillations.

\begin{definition}
\label{superq}
Let $q>0$, $z \in \mathbb{C}$ and $a>1$.  We assume that $\{h_j(n)\}$ is a real-valued sequence  satisfying $|h_j(n)| \leq 1$ for all $n\in\mathbb{N}$ and $j \in \mathbb{N}_0$. Then the Mittag-Leffler superoscillations are defined as
\begin{equation}
\label{MLs}
f_{n,q}(z)=\sum_{j=0}^{n} Z_j(n,a)E_q(z, z_j), \quad z_j=-ih_j(n),
\end{equation}
where $Z_j(n,a)$ are complex-valued coefficients.
\end{definition}

Since the function $E_q(z, z_j)$ is holomorphic in the variable $z$,  for $a>1$ by the supershift property (see Remark \ref{rem}) we have

$$ \lim_{n\to\infty } F_{n,q}(z,a)=E_q(z,-ia).$$

\subsection{Touchard Polynomials and Fock Space}

In \cite{ADK}, the authors studied the images of the Schwartz space and of its dual (the space of tempered distributions) under the Segal–Bargmann transform. The characterization of these images leads to interesting reproducing kernel Hilbert spaces. One of these is given by the following space:
$$ \mathcal{H}_p(\mathbb{C}) := \left\{ f(z) = \sum_{n=0}^\infty z^n a_n, \quad (a_n)_{n \in \mathbb{N}_0} \subseteq \mathbb{C}, \quad \sum_{n=0}^\infty \frac{|a_n|^2 n!}{(n+1)^{2p}} < \infty \right\},$$

for $p \in \mathbb{N}_0$. The space $\mathcal{H}_p(\mathbb{C})$ is a reproducing kernel Hilbert space with the kernel given by
$$ K_p(z, w) =\frac{1}{z \overline{w}} T_{2p+1}(z \overline{w})e^{z\bar{w}},
$$
where $T_{n}$ are the Touchard polynomials (see \cite{T}), given by
$$ T_n(x) := \sum_{k=0}^n S(n, k) x^k, \qquad S(n, k) := \frac{1}{k!} \sum_{i=0}^k (-1)^i \binom{k}{i} (k-i)^n.$$
where $S(n, k)$ represents the Stirling numbers of the second kind. The generating function for the Touchard polynomials is given by
\begin{equation}
\label{genn}
T_n(x)= e^{-x} \sum_{k=0}^{\infty} \frac{k^n x^k}{k!}.
\end{equation}

\begin{remark}
The space $ \mathcal{H}_p(\mathbb{C})$ coincides with the Fock space if we consider $p=0$.
\end{remark}

\begin{definition}
\label{superp}
Let $p>0$ and $a>1$. We suppose that $\{h_j(n)\}$ is a real-valued sequence  satisfying $|h_j(n)| \leq 1$ for all $n\in\mathbb{N}$ and $j \in \mathbb{N}_0$. Then the Touchard superoscillations are defined as

\begin{equation}
\label{N1}
f_{n,p}(z)=\sum_{j=0}^{n} Z_j(n,a)K_p(z, z_j), \qquad z_j=-i h_j(n).
\end{equation}

\end{definition}

Since the function $K_p(z,z_j)$ is holomorphic in the variable $z$, by the supershift property we have

\[
\lim_{n\to\infty} f_{n,p}(z) = K_p(z,-ia).
\]

\begin{remark}
\label{superpp}
If we set $m = 1$ in \eqref{MLs} and $p = 0$ in \eqref{N1}, then we recover the classical notion of superoscillations, see \eqref{eq:suposc}.
\end{remark}

To establish a connection between the Touchard superoscillations and the classical superoscillations (see \eqref{gensuper}), we recall the creation and annihilation operators in the Fock space:
$$
M_z f(z):= z f(z), \qquad \partial f(z):= \frac{d}{dz} f(z).
$$

\begin{proposition}
Let $p \in \mathbb{N}_0$ and $n \in \mathbb{N}$. Then the Touchard superoscillations $f_{n,p}(z)$ are related to the classical superoscillations as follows:
\begin{equation}
\label{psuper1}
f_{n,p}(z) = \left( \mathcal{I} + M_z \partial_z \right)^{2p} f_n(z).
\end{equation}
\end{proposition}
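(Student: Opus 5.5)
The plan is to check the identity on each reproducing kernel and then extend by linearity. Both sides of \eqref{psuper1} are finite linear combinations with the same coefficients $Z_j(n,a)$ and the same nodes $z_j=-ih_j(n)$. On the right, $f_n(z)=\sum_{j=0}^n Z_j(n,a)B(z,z_j)$ by \eqref{gensuper}; on the left, $f_{n,p}(z)=\sum_{j=0}^n Z_j(n,a)K_p(z,z_j)$ by \eqref{N1}. Since $(\mathcal I+M_z\partial_z)^{2p}$ is linear, it suffices to prove, for every fixed $w\in\mathbb C$,
\begin{equation*}
(\mathcal I+M_z\partial_z)^{2p}\, e^{z\overline w}=K_p(z,w)=\frac{1}{z\overline w}\,T_{2p+1}(z\overline w)\,e^{z\overline w}.
\end{equation*}

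First I will rewrite $K_p$ as a power series using the generating function \eqref{genn}. With $x=z\overline w$,
\begin{equation*}
\frac{1}{x}T_{2p+1}(x)e^{x}=\frac1x\sum_{k=0}^\infty\frac{k^{2p+1}x^k}{k!}=\sum_{k=1}^\infty\frac{k^{2p}x^{k-1}}{(k-1)!}=\sum_{m=0}^\infty\frac{(m+1)^{2p}x^m}{m!}.
\end{equation*}
The $k=0$ term vanishes because $2p+1\ge1$. This also shows that the apparent singularity at $z\overline w=0$ is removable, so we interpret $K_p$ there by continuity, with value $1$ at $x=0$.

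Next I will compute the action of the operator on monomials. Since $M_z\partial_z z^m=mz^m$, we have $(\mathcal I+M_z\partial_z)z^m=(m+1)z^m$, and iterating gives $(\mathcal I+M_z\partial_z)^{2p}z^m=(m+1)^{2p}z^m$. Expanding $e^{z\overline w}=\sum_m \overline w^m z^m/m!$ and applying the operator term by term then produces exactly the series above. The term-by-term application is justified because the series is entire in $z$, so it converges locally uniformly together with all its derivatives.

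The only delicate point is this interchange of the differential operator with the infinite sum, together with the removable singularity of the stated kernel formula at $z\overline w=0$. Both are routine for entire functions. Summing over $j$ with the weights $Z_j(n,a)$ then yields \eqref{psuper1}. The case $p=0$ gives $f_{n,0}=f_n$, which is consistent with Remark \ref{superpp}.
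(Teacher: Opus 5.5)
Your proof is correct and uses the same two ingredients as the paper: the expansion $K_p(z,w)=\sum_{m\ge0}\frac{(m+1)^{2p}(z\overline w)^m}{m!}$ obtained from the generating function \eqref{genn}, and the fact that $M_z\partial_z$ acts on $z^m$ as multiplication by $m$. The paper wraps this in an induction on $p$, applying $(\mathcal I+M_z\partial)^2$ to pass from $K_p$ to $K_{p+1}$, whereas you apply $(\mathcal I+M_z\partial_z)^{2p}$ to $e^{z\overline w}$ in one step; this is slightly cleaner, and it also keeps the conjugate $\overline{z_j}$ that the paper's series expansion drops.
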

\begin{proof}
We prove the result by induction on $p$. The result is trivial for $p=0$, see Remark \ref{superpp}. We suppose that \eqref{psuper} is valid for $p$ and we prove it for $p+1$. By using \eqref{genn} we have
\begin{equation}
K_p(z,z_j)= \frac{1}{z z_j} \sum_{k=0}^\infty \frac{k^{2p+1} z^k z_{j}^k}{k!}= \sum_{k=1}^{\infty} \frac{k^{2p} z^{k-1}z_j^{k-1}}{(k-1)!}= \sum_{k=0}^{\infty} \frac{(k+1)^{2p} z^{k}z_j^{k}}{k!}.
\end{equation}

By using the above identity and the inductive hypothesis we have
\begin{eqnarray*}
(\mathcal{I}+M_z \partial)^{2p+2} f_{n}(z)&=&(\mathcal{I}+M_z \partial)^{2} f_{n,p}(z)\\
&=& (\mathcal{I}+M_z \partial)^{2} \left( \sum_{j=0}^{n}Z_j(n,a) K_p(z,z_j)\right)\\
&=&\sum_{j=0}^{n}Z_j(n,a) K_p(z,z_j)+2 \sum_{j=0}^{n}Z_j(n,a) (M_z \partial) K_p(z,z_j)\\
&&+\sum_{j=0}^{n}Z_j(n,a) (M_z \partial)^2 K_p(z,z_j)\\
&=& \sum_{j=0}^{n}Z_j(n,a) \sum_{k=0}^{\infty} \frac{(k+1)^{2p} z^k z_j^k}{k!}+2\sum_{j=0}^{n}Z_j(n,a) \sum_{k=0}^{\infty} \frac{(k+1)^{2p} kz^k z_j^k}{k!}\\
&&+\sum_{j=0}^{n}Z_j(n,a) \sum_{k=0}^{\infty} \frac{(k+1)^{2p} k^2z^k z_j^k}{k!}\\
&=& \sum_{j=0}^n Z_j(n,a) \left[\sum_{k=0}^\infty \frac{(k+1)^{2p+2} z^k z_j^k}{k!}\right]\\
&=&\sum_{j=0}^{n}Z_j(n,a) K_{p+1}(z,z_j)\\
&=& f_{n,p+1}(z).
\end{eqnarray*}
This proves the result.
\end{proof}

\begin{remark}
	In \cite{SupBook} superoscillations were originally introduced in the setting of real variables, and later  they were extended to the complex setting. For our purposes, the Mittag–Leffler  and Touchard superoscillations have been defined directly in the complex setting.
\end{remark}

\subsection{Representer Theorem and Supershift Property}

In this part of the section, our aim is to establish a correspondence between Mittag–Leffler superoscillations and Touchard superoscillations on one hand, and the complex Representer theorem on the other. More precisely, we seek to solve the following two problems:

\begin{prob}
\label{Mlp}
Let $n\in\mathbb{N}, (h_j(n))_{0\leq j\leq n}$ be a real-valued sequence such that $\sup_{0 \leq j \leq n}|h_j(n)| \leq 1$.
For an input set of the form $ \{z_j\}_{j=0}^n= \biggl \{-i h_j(n) \biggl\}_{j=0}^n$, we seek a corresponding output set 
$\{w_0,\dots,w_n\}$ such that the Mittag--Leffler superoscillations $f_{n,q}(z)$ solves the learning problem 
$$
f_* = \underset{f \in ML_q(\mathbb{C})}{\arg\min}\, J(f), $$

	where
$$
J(f) = \sum_{k=0}^n \left| w_k -f(z_k) \right|^2 + \lambda \|f\|^2, \qquad \lambda>0, \qquad f\in ML_q(\mathbb{C}).
$$

\end{prob}

\begin{prob}
\label{Tp}
Let $n\in\mathbb{N}, (h_j(n))_{0\leq j\leq n}$ be a real-valued sequence such that $\sup_{0 \leq j \leq n}|h_j(n)| \leq 1$.
For an input set of the form $ \{z_j\}_{j=0}^n= \biggl \{-i h_j(n) \biggl\}_{j=0}^n$,  we aim to find a corresponding output set 
$\{w_0,\dots,w_n\}$ such that the Touchard superoscillations $f_{n,p}(z)$ solve the learning problem
\begin{equation}
	\label{minp}
	f_* = \underset{f \in \mathcal{H}_p(\mathbb{C})}{\arg\min}\, J(f),
\end{equation}

	where
$$
J(f) = \sum_{k=0}^n \left| w_k -f(z_k) \right|^2 + \lambda \|f\|^2, \qquad \lambda>0, \qquad f\in \mathcal{H}_p(\mathbb{C}).
$$
\end{prob}

In the following result we provide a solution of Problem \ref{Mlp}.

\begin{proposition}
	\label{qsuper}
The entries of the output vector $\{w_k\}_{k=0}^n$ that solve Problem~\ref{Mlp} are given by

\begin{equation}
	\label{one}
	w_k=\lambda Z_k(n,a)+ \sum_{j=0}^{n}Z_j(n,a)E_q(-ih_k(n), -ih_j(n)),
\end{equation}
where $h_{\ell}(n)$ for $\ell=j,k$ is a sequence such that $ |h_\ell(n)| \leq 1$ for all $\ell \in \mathbb{N}_0$.
\end{proposition}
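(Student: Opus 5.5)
The plan follows the same template as the Fock case (the theorem giving \eqref{compp}). First, $ML_q(\mathbb{C})$ is a reproducing kernel Hilbert space with kernel $E_q(z,w)$ from \eqref{eq:mlfkernel}, and $\lambda>0$. So Theorem \ref{thm:crt}, applied with the regression loss and $g(t)=\lambda t^2$, shows that any minimizer of $J$ over $ML_q(\mathbb{C})$ has the form $f_*(z)=\sum_{j=0}^n \alpha_j E_q(z,z_j)$ with $z_j=-ih_j(n)$. We want $f_*=f_{n,q}$ from \eqref{MLs}. By comparison, this amounts to taking $\balpha=(Z_0(n,a),\dots,Z_n(n,a))^T$; if the kernel functions $E_q(\cdot,z_j)$ are linearly independent, this choice is forced.

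Second, invoke Proposition \ref{prop:w}. It says that $J(\balpha)$ is minimized if and only if $\bw=(K+\lambda I)\balpha$, where $K$ is the Gram matrix $K=(E_q(z_k,z_j))_{0\le k,j\le n}$. To use it, I have to check that $K$ is Hermitian and strictly positive definite.

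Hermitian symmetry is immediate. Since $\Gamma(qn+1)$ is real, $\overline{E_q(z,w)}=E_q(w,z)$. For strict positive definiteness, write
\[
\sum_{k,j} c_k\overline{c_j}E_q(z_k,z_j)=\sum_{m\ge0}\frac{1}{\Gamma(qm+1)}\Bigl|\sum_k c_k z_k^m\Bigr|^2 .
\]
When the points $z_k$ are distinct, this vanishes only if $\sum_k c_k z_k^m=0$ for all $m$. Vandermonde invertibility then forces $c=0$. This is the only delicate point: it requires the $h_j(n)$ to be distinct, which holds for the standard choice $h_j(n)=1-2j/n$. I would state it as the standing assumption, as is implicit in the Fock-space case.

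Finally, compute the $k$-th component of $K\balpha$. It is
\[
(K\balpha)_k=\sum_{j=0}^n Z_j(n,a)\,E_q(-ih_k(n),-ih_j(n)).
\]
Adding $\lambda Z_k(n,a)$ gives exactly \eqref{one}. No closed-form summation is needed, unlike the computation in Theorem \ref{class}. The main obstacle is therefore only the positive definiteness and invertibility of $K$ justifying Proposition \ref{prop:w}; the rest is a direct substitution.
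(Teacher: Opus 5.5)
Your proposal is correct and follows the paper's own argument. You apply Theorem~\ref{thm:crt} to get the kernel expansion with $\balpha=(Z_0(n,a),\dots,Z_n(n,a))^T$, invoke Proposition~\ref{prop:w} to obtain $\bw=(K+\lambda I)\balpha$, and read off $(K\balpha)_k=\sum_j Z_j(n,a)E_q(-ih_k(n),-ih_j(n))$; your additional check that the Gram matrix is Hermitian and strictly positive definite for distinct $h_j(n)$ (series expansion plus Vandermonde) supplies a hypothesis of Proposition~\ref{prop:w} that the paper uses without verifying.
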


\begin{proof}

According to the complex Representer Theorem (Theorem~\ref{thm:crt}), the minimizer $f_*$ can be written as

$$ f_*(z)= \sum_{j=0}^{n}Z_j(n,a)E_q(z_k,z_j).$$
We take the input set as $z_j=\{-ih_j(n)\}_{j=0}^n$. Thus, by Proposition~\ref{prop:w}, the components of the vector $\mathbf{w}$ that solve Problem~\ref{Mlp} are given by

	\begin{equation}
		\label{s1}
		w_k=\lambda \alpha_k+K\alpha_k,
	\end{equation}
	where 
$$
		\balpha= \left(C_0(n,a),..., C_n(n,a)\right)^T, \qquad 		K= \left( \sum_{\ell=0}^\infty \frac{(h_j(n)h_k(n))^\ell}{\Gamma(q\ell+1)}\right)_{0 \leq k, j \leq n}.
$$
Since 
$$ K \balpha=\sum_{j=0}^{n}Z_j(n,a)E_q(-ih_k(n), -ih_j(n)),$$
by \eqref{s1} we get the result.
\end{proof}

By using similar arguments, we obtain the following result, which can be regarded as the solution to Problem~\ref{Tp}.

\begin{proposition}
\label{psuper}
	The components of the output vector $\{w_k\}_{k=0}^n$ in Problem~\ref{Tp} are given by
\begin{equation}
\label{pcomponent}
w_k=\lambda Z_k(n,a)+\sum_{j=0}^{n}Z_j(n,a)K_p(-ih_k(n),-ih_j(n))
\end{equation}
\end{proposition}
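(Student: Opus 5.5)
The plan is to mirror the proof of Proposition~\ref{qsuper}, replacing the Mittag--Leffler kernel $E_q$ with the Touchard kernel $K_p$ of $\mathcal{H}_p(\mathbb{C})$.

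First, I will place the Touchard superoscillation $f_{n,p}$ of \eqref{N1} in the form required by the complex representer theorem (Theorem~\ref{thm:crt}). With input points $z_j=-ih_j(n)$, $j=0,\dots,n$, we have
\[
f_{n,p}(z)=\sum_{j=0}^n Z_j(n,a)K_p(z,z_j),
\]
which is exactly of the form \eqref{eq:fmin} with coefficient vector $\balpha=(Z_0(n,a),\dots,Z_n(n,a))^T$. Hence $f_{n,p}$ lies in the span $\mathcal{H}_{\mathcal S}$ of the kernel sections $K_p(\cdot,z_j)$ and is an admissible candidate minimizer for Problem~\ref{Tp}.

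Next, I will apply Proposition~\ref{prop:w}, which says that $J(\balpha)$ is minimized if and only if $\bw=(K+\lambda I)\balpha$. Here $K$ is the Gram matrix $\bigl(K_p(-ih_k(n),-ih_j(n))\bigr)_{0\le k,j\le n}$. The main obstacle is the hypothesis of that proposition, namely that this Gram matrix is Hermitian and strictly positive definite.

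\begin{itemize}
\item \emph{Hermitian.} This follows from $K_p(z,w)=\overline{K_p(w,z)}$, since $K_p$ is a reproducing kernel.
\item \emph{Real entries.} Using the series $K_p(z,w)=\sum_{\ell\ge0}(\ell+1)^{2p}(z\bar w)^\ell/\ell!$ from the proof of \eqref{psuper1}, each entry equals $\sum_\ell (\ell+1)^{2p}(h_kh_j)^\ell/\ell!$. This is real and symmetric.
\item \emph{Strict positivity.} I will argue as is implicitly done in the earlier propositions. The monomials are orthogonal with all weights positive, so the kernel sections at distinct points are linearly independent; this amounts to a Vandermonde-type argument on the power series. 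Strict positivity therefore holds whenever the $h_j(n)$ are pairwise distinct, which I will assume as in the previous results.
\end{itemize}

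Finally, I will compute the $k$-th component of $K\balpha$, which is $\sum_{j=0}^n Z_j(n,a)K_p(-ih_k(n),-ih_j(n))$. Adding $\lambda Z_k(n,a)$ then gives \eqref{pcomponent}. If desired, the entries can be written explicitly as $\frac{1}{h_kh_j}T_{2p+1}(h_kh_j)e^{h_kh_j}$, with the value at $h_kh_j=0$ understood as the limit $1$.
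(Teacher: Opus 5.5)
Your proposal is correct and follows the same route as the paper, which proves this by saying it follows from the same argument as Proposition~\ref{qsuper}: write $f_{n,p}$ in representer form with $\balpha=(Z_0(n,a),\dots,Z_n(n,a))^T$, apply Proposition~\ref{prop:w}, and read off $(K+\lambda I)\balpha$ componentwise. Your extra check that the Gram matrix $\bigl(K_p(-ih_k(n),-ih_j(n))\bigr)_{0\le k,j\le n}$ is real, Hermitian and strictly positive definite when the $h_j(n)$ are pairwise distinct (via the Vandermonde argument on the power series of $K_p$) supplies a hypothesis of Proposition~\ref{prop:w} that the paper leaves implicit.
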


By fixing a specific value of $p$, and choosing suitable coefficients $Z_j(n,a)$ and sequence $h_j(n)$,  we can derive a closed-form expression for \eqref{pcomponent}. We study the particular case when $p=1$.

\begin{proposition}
\label{pcase}
Let $n \in \mathbb{N}$ and $a > 1$. If, in Problem~\ref{Tp}, we take the superoscillating sequence \eqref{eq:suposc} to be the solution of the minimization Problem \ref{Tp}, then the components of the vector $\mathbf{w} = \{w_k\}_{k=0}^n$ that solve Problem~\ref{Tp} are given by
\begin{eqnarray}
	\nonumber
w_k&=&
 \lambda \binom{n}{k} \left(\frac{1+a}{2}\right)^n \left(\frac{1-a}{1+a}\right)^k
+e^{z_k} \left(\frac{1+a}{2}\right)^n\left(1+ \left(\frac{1-a}{1+a}\right) e^{-\frac{2z_k}{n}}\right)^{n-2}\\
\nonumber
&& \biggl \{ \left[1+\left(\frac{1-a}{1+a}\right) e^{-\frac{2z_k}{n}}\right]^2+3 z_k \left[1- \left(\frac{1-a}{1+a}\right)^2 e^{-\frac{4 z_k}{n}}\right]  +\\
\label{finalp}
&&+ z_k^2 \left[ \left[1- \left(\frac{1-a}{1+a}\right) e^{-\frac{2 z_k}{n}}\right]+\frac{4}{n} \left(\frac{1-a}{1+a}\right) e^{-\frac{2 z_k}{n}}\right] \biggl \},
\end{eqnarray}
where $z_{k}:=1-\frac{2k}{n}$.
\end{proposition}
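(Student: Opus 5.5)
We obtain this by specializing Proposition~\ref{psuper} to $p=1$, $Z_j(n,a)=C_j(n,a)$ from \eqref{eq:cjzj}, and $h_j(n)=1-\frac{2j}{n}$. The term $\lambda Z_k(n,a)=\lambda\binom{n}{k}\left(\frac{1+a}{2}\right)^n\left(\frac{1-a}{1+a}\right)^k$ is immediate. The work is to put $\sum_j C_j(n,a)K_1(-ih_k(n),-ih_j(n))$ into closed form. Throughout, $z_k$ in \eqref{finalp} denotes the real number $h_k(n)=1-\frac{2k}{n}$, not the input point $-ih_k(n)$.

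\textbf{Step 1: simplify the kernel.} The Touchard polynomial is $T_3(x)=x+3x^2+x^3$, since $S(3,1)=1$, $S(3,2)=3$, $S(3,3)=1$. Hence
\[
K_1(z,w)=(1+3x+x^2)e^{x},\qquad x=z\overline{w}.
\]
For $z=-ih_k$ and $w=-ih_j$ we get $x=(-ih_k)(\overline{-ih_j})=h_kh_j$. This reduces the sum to
\[
\sum_{j=0}^n C_j\left(1+3h_kh_j+h_k^2h_j^2\right)e^{h_kh_j}.
\]

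\textbf{Step 2: use a generating function.} Put $G(t)=\sum_j C_j e^{th_j}$. By the binomial computation already carried out in the proof of Theorem~\ref{class},
\[
G(t)=c\,e^{t}(1+u)^n,\qquad c=\left(\tfrac{1+a}{2}\right)^n,\quad u=u(t)=\tfrac{1-a}{1+a}e^{-2t/n}.
\]
Since $\sum_j C_jh_j^me^{th_j}=G^{(m)}(t)$, the sum from Step 1 equals
\[
G(t)+3tG'(t)+t^2G''(t)\quad\text{at } t=h_k .
\]

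\textbf{Step 3: differentiate.} Using $u'=-\frac{2}{n}u$, we find
\[
G'(t)=c\,e^t(1+u)^{n-1}(1-u).
\]
Differentiating once more and factoring out $c\,e^t(1+u)^{n-2}$ leaves the bracket
\[
(1+u)(1-u)-\frac{2(n-1)}{n}u(1-u)+\frac{2u}{n}(1+u)=(1-u)^2+\frac{4u}{n}.
\]
Collecting everything with the common factor $c\,e^{t}(1+u)^{n-2}$ gives
\[
(1+u)^2+3t(1-u^2)+t^2\left[(1-u)^2+\frac{4u}{n}\right].
\]
Substituting $t=z_k$ and $u=\frac{1-a}{1+a}e^{-2z_k/n}$ yields the braces in \eqref{finalp}.

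\textbf{Main obstacle.} The only delicate part is the bookkeeping in $G''$, namely obtaining the factorization through $(1+u)^{n-2}$ and simplifying the bracket. I will check it by expanding both sides as quadratics in $u$. This check gives $(1-u)^2$ in the $z_k^2$ term, so the first summand inside that bracket in \eqref{finalp} should be read as squared; I will point this out when comparing with the stated formula. The other subtlety is the convention $\overline{-ih_j}=ih_j$, which makes $x=h_kh_j$ real and positive-signed, consistent with the Fock-space computation in Theorem~\ref{class}.
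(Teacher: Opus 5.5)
Your proof is correct, and I checked the computation, including the bracket $(1-u)^2+\frac{4u}{n}$. You are also right that the $z_k^2$ bracket in \eqref{finalp} is missing a square. The paper's own intermediate formula \eqref{p3} has $\bigl[1-\frac{1-a}{1+a}e^{-2z_k/n}\bigr]^2$, so the printed statement contains a typo, and your version is what the paper's argument actually yields.

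Your route differs from the paper's in how the weighted sums are evaluated.
\begin{itemize}
\item The paper splits $\sum_j C_j(1+3z_kz_j+z_k^2z_j^2)e^{z_kz_j}$ into three sums and writes $z_j=1-\frac{2j}{n}$. It then evaluates the resulting binomial moments with the identities \eqref{newt1} for $\sum_j\binom{n}{j}jx^j$ and $\sum_j\binom{n}{j}j^2x^j$ at $x=\frac{1-a}{1+a}e^{-2z_k/n}$. The recombination of $1-\frac{4j}{n}+\frac{4j^2}{n^2}$ is left implicit.
\item You keep the closed form $G(t)=c\,e^t(1+u)^n$ with $t$ free and bring down powers of $h_j$ by differentiating in $t$. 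No moment identities or expansion in $j$ are needed.
\end{itemize}

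Your operator $1+3t\partial_t+t^2\partial_t^2$ is exactly $(1+t\partial_t)^2$. Since $G(t)=F_n(-it)$ and $t\partial_t$ corresponds to $M_z\partial_z$ under $z=-it$, you are in effect evaluating $f_{n,1}=(\mathcal{I}+M_z\partial_z)^2F_n$ from \eqref{psuper1} at the input points. This makes the structure of the answer transparent. It also extends mechanically to any $p$ by applying $(1+t\partial_t)^{2p}$ to $c\,e^t(1+u)^n$, whereas the paper's route would need higher binomial moment identities. The paper's route, in exchange, only quotes standard combinatorial identities and uses no calculus.

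One caveat applies to both arguments: for $n=1$, factoring out $(1+u)^{n-2}=(1+u)^{-1}$ requires $1+u\neq 0$. This can fail for a specific $a>1$.
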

\begin{proof}
	
Since we are treating the superoscillations $F_n(z,a)$ defined in \eqref{eq:suposc} as the solution of the minimization problem, it is sufficient to apply formula \eqref{pcomponent} with $Z_\ell(n,a) = C_\ell(n,a)$ and $h_\ell(n):=z_\ell = 1 - \frac{2\ell}{n}$ for $\ell = j,k$. We set $z_j=1- \frac{2 j}{n}$. Since $T_3(z_kz_j)=(z_k z_j)+3(z_k z_j)^2+(z_k z_j)^3$  by formula \eqref{pcomponent} we obtain

\begin{eqnarray}
	\nonumber
	\sum_{j=0}^{n} C_j(n,a) K_1(z_k, z_j)
	&=& \sum_{j=0}^{n} C_j(n,a) \frac{1}{z_k z_j}T_3(z_k z_j) e^{z_kz_j}\\
	\label{summattion}
	&=& \sum_{j=0}^n C_j(n,a)e^{z_k z_j}+3\sum_{j=0}^n C_j(n,a) (z_k z_j) e^{z_k z_j}+\sum_{j=0}^n C_j(n,a) (z_kz_j)^2 e^{z_k z_j}.
\end{eqnarray}

We focus on finding the closed forms of the above summations. By the binomial theorem we deuce that
\begin{equation}
	\label{zero}
	\sum_{j=0}^n C_j(n,a)e^{z_k z_j}= e^{z_k} \left(\frac{1+a}{2}\right)^n \left[1+ \left(\frac{1-a}{1+a}\right) e^{-\frac{2}{n}z_k}\right]^n.
\end{equation}
For the other two summations we use the following facts:
\begin{equation}
	\label{newt1}
	\sum_{j=0}^n \binom{n}{j}j x^j=nx (1+x)^{n-1}, \qquad 	\sum_{j=0}^n \binom{n}{j} j^2 x^j=nx(1+x)^{n-2} (1+nx),
\end{equation}
with $ x= \left(\frac{1-a}{1+a}\right) e^{-\frac{2 z_k}{n}}$ we get
\begin{eqnarray}
\nonumber
3\sum_{j=0}^n C_j(n,a) (z_k z_j) e^{z_k z_j}&=&3z_ke^{z_k}\left(\frac{1+a}{2}\right)^n  \left[1+ \left(\frac{1-a}{1+a}\right) e^{-\frac{2}{n}z_k}\right]^{n-1}\\
\label{p2p}
 &&\left[1-\left(\frac{1-a}{1+a}\right) e^{-\frac{2}{n}z_k}\right].
\end{eqnarray}
and
\begin{eqnarray}
	\nonumber
	\sum_{j=0}^n C_j(n,a) (z_kz_j)^2 e^{z_k z_j}&=& e^{z_k} z_k^2 \left(\frac{1+a}{2}\right)^n\left(1+ \left(\frac{1-a}{1+a}\right) e^{-\frac{2z_k}{n}}\right)^{n-2} \times\\
	\label{p3}
	&& \times \biggl \{\left[1- \left(\frac{1-a}{1+a}\right) e^{-\frac{2}{n}z_k}\right]^2+\frac{4}{n}\left(\frac{1-a}{1+a}\right) e^{-\frac{2z_k}{n}} \biggl \}.
\end{eqnarray}

By plugging \eqref{zero}, \eqref{p2p} and \eqref{p3} into \eqref{pcomponent} and using the fact that $ \lambda Z_k(n,a)=\lambda  C_k(n,a)$ we get \eqref{finalp}.
\end{proof}

\begin{example}
To show how the Touchard polynomials solve Problem \ref{Tp} when $p=1$, we present the following figure.
For $n=100,a=2,$ and $\lambda =1$, we calculate the desired outputs $\{w_k\}$ and plot $\{z_k,w_k\}$, and then we plot the Touchard supershift.
    \begin{figure}[H]
        \centering
        \includegraphics[width=0.7\linewidth]{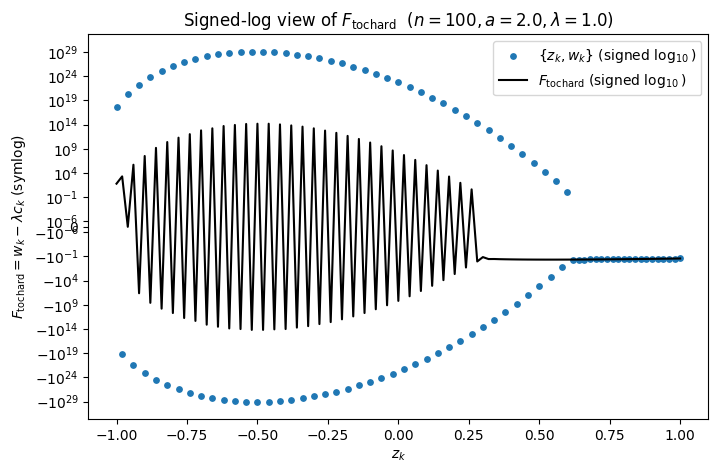}
        \label{fig:placeholder}
    \end{figure}
\end{example}

\begin{remark}
From Proposition \ref{pcase}, it becomes evident that obtaining a closed formula for \eqref{pcomponent} for arbitrary $p \in \mathbb{N}_0$ is highly nontrivial.
\end{remark}

\begin{remark}
By setting $q=1$ and $p=0$ in Propositions~\ref{qsuper} and \ref{psuper}, we recover the result proved in Theorem~\ref{class}.

\end{remark}

\section{Representer Theorem: The Hardy Space and Blaschke Product}
In this section, we present an application of the complex representer theorem by considering the Hardy space over the unit disk as the RKHS and the Blaschke product as the function to be minimized. We begin by recalling the definition of the Hardy space over the unit disk $\mathbb{D}$.

\begin{definition}[Hardy space]\label{def:hardy}
	The	Hardy space space $\mathbf H^2(\mathbb{D})$ consists of holomorphic functions $f$ over the disk such that
	$$\lim_{r\rightarrow 1}\frac{1}{2\pi}\int_0^{2\pi}|f(re^{it})|^2dt<\infty.$$
\end{definition}
The Hardy space of the unit disk is a reproducing kernel Hilbert space, whose reproducing kernel is given by
\begin{equation}
	\label{kernel1}
	K(z,w)=\displaystyle \dfrac{1}{1-z\overline{\w}} = \sum_{n=0}^{\infty} z^n\overline{\w}^n,\quad \forall z,w\in \mathbb{D},
\end{equation}
see \cite{DD} for more information on the Hardy space.
An important tool in the study of the Hardy space of the unit disk is provided by the Blaschke factors and the associated Blaschke products.  These objects play a fundamental role in the analysis of invariant subspaces and interpolation problems; see \cite{DD, GMW, R}.

\begin{definition}
	Let $z \in \mathbb{C}$, $a \in \mathbb{D}$. The Blaschke factor is defined as
	$$\frac{z - a}{1 - \bar{a} z}$$
\end{definition}
We observe that Blaschke factors define the automorphisms of $ \mathbb{D} $ up to a unimodular constant.
\begin{definition}
	Let $ \{a_j\} \subseteq \mathbb{D}$, $j=1$,...,$n$. Then Blaschke product is defined as
	\begin{equation}
		B_n(z) =  \prod_{j=1}^n \frac{z - a_j}{1 - \overline{a_j} z},
	\end{equation}
\end{definition}
\begin{remark}
	Since $|B_n(z)| = 1$, the Blaschke product is one of the simplest $\mathbf{H}^2$-inner functions. Its zeros are given by the set $\{a_j\}_{j=1,\dots,n}$.
	
\end{remark}

The derivative of $B_n(z)$ is given by 

\[
B_n'(z) = \sum_{l=1}^n \frac{1 - |a_l|^2}{(1 - \overline{a_l} z)^2} \left( \prod_{\substack{k=1 \, \,k \neq l}}^n \frac{z - a_k}{1 - \overline{a_k} z} \right),
\]
see \cite{GMW}, this implies that

\[
B_n'(a_j) = \frac{1}{1 - |a_j|^2} \prod_{\substack{k=1 \, \, k \neq j}}^n \frac{a_j - a_k}{1 - \overline{a_k} a_j}.
\]
Assuming the Blaschke product has simple zeros distinct from the origin, it can be expressed as a linear combination of the reproducing kernels of the Hardy space over the disk, namely

\begin{equation}
	\label{Bed}
	B_n(z) =c_0 + \sum_{j=1}^{n}c_j K(z,a_j), \qquad c_0=\frac{1}{\overline{B_n(0)}}, \quad c_j =\frac{1}{\bar a_j \overline{B'_n(a_j)}},
\end{equation}

see \cite{FL}, where the above identity was proved in a more general setting.

\begin{remark}
	In \cite{CQK}, the authors used the identity in \eqref{Bed} to prove the so-called Bedrosian identity. 
\end{remark}

Using \eqref{Bed}, we solve the following learning problem in the Hardy space.

\begin{problem}
	\label{HardyP}
	For an input set $z_1, \dots, z_n$ in the unit disk, we seek a corresponding output set $w_1, \dots, w_n$ such that the Blaschke product $B_n(z)$ solves the learning problem
	\[
	f_* = \underset{f \in \mathbf{H}^2(\mathbb{D})}{\arg\min} \, J(f),
	\]

		where
$$
J(f) = \sum_{k=1}^n \left| w_k -f(z_k) \right|^2 + \lambda \|f\|^2, \qquad \lambda>0, \qquad f\in \mathbf{H}^2(\mathbb D).
$$
	
\end{problem}

\begin{theorem}
	The components of the output set $ \{w_k\}_{1 \leq k \leq n}$ of the Problem \ref{HardyP} are given by 
	\begin{equation}\label{eq:bl_wk}
		w_k =  B_n(a_k) + \lambda \frac{1}{\bar a_k  \overline{B'_n(a_k)}} - \frac{1}{\overline{B_n(0)}}, \qquad 1 \leq k \leq n.
	\end{equation}
\end{theorem}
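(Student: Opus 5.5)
The plan is to follow the same scheme used for the Fock and generalized Fock spaces. That is, combine the complex representer theorem (Theorem \ref{thm:crt}) with Proposition \ref{prop:w}, and read off the kernel expansion of the Blaschke product from \eqref{Bed}.

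\textbf{Setting up the data and coefficients.} I take the input set to be the zeros of the Blaschke product, $z_k=a_k$ for $1\le k\le n$. These are assumed simple, distinct and nonzero, which is exactly what \eqref{Bed} requires. By \eqref{Bed},
\[
B_n(z)-c_0=\sum_{j=1}^n c_j K(z,a_j), \qquad c_j=\frac{1}{\bar a_j\overline{B_n'(a_j)}}.
\]
The right-hand side lies in the span $\mathcal H_{\mathcal S}$ of $\{K(\cdot,z_k)\}_{k=1}^n$. So by Theorem \ref{thm:crt} it is the admissible form of a minimizer, with coefficient vector $\balpha=(c_1,\dots,c_n)^T$.

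\textbf{Computing the outputs.} Since the points $a_j$ are distinct, the Gram matrix $K=(K(a_k,a_j))_{k,j}=\bigl((1-a_k\bar a_j)^{-1}\bigr)_{k,j}$ is a Cauchy-type matrix. It is strictly positive definite, so Proposition \ref{prop:w} applies and gives $\bw=(K+\lambda I)\balpha$. The $k$-th component of $K\balpha$ is
\[
\sum_{j=1}^n c_j K(a_k,a_j)=B_n(a_k)-c_0=B_n(a_k)-\frac{1}{\overline{B_n(0)}},
\]
which is obtained by evaluating the identity \eqref{Bed} at $z=a_k$. Adding $\lambda\alpha_k=\lambda c_k$ yields \eqref{eq:bl_wk}. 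I would keep the term $B_n(a_k)$ as written for uniformity, even though it vanishes because $a_k$ is a zero of $B_n$.

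\textbf{Main obstacle: the constant term.} The delicate point is $c_0$. The constant function equals $K(\cdot,0)$, and $0$ is not among the inputs, so $B_n$ itself is generally not in $\mathcal H_{\mathcal S}$. What the representer theorem actually produces is the kernel part $B_n-c_0$, and this is the object the formula \eqref{eq:bl_wk} encodes. In the write-up I would state explicitly that the minimizer is identified with $B_n$ up to the additive constant $c_0$.

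The alternative would be to add the point $z_0=0$ to the inputs with coefficient $c_0$, which would turn $B_n$ itself into the minimizer. However, that changes the data set and produces an extra output $w_0$. It also adds $c_0$ back into each $w_k$, giving $w_k=B_n(a_k)+\lambda c_k$ rather than \eqref{eq:bl_wk}. For that reason I would adopt the first interpretation, which matches the stated formula.

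Finally, I would verify strict positive definiteness of the Cauchy-type Gram matrix for distinct points of $\mathbb D$, so that Proposition \ref{prop:w} applies.
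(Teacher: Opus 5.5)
Your proposal is correct and follows the paper's proof step for step: you take inputs $z_k=a_k$ and coefficient vector $\balpha=(c_1,\dots,c_n)^T$, use Proposition~\ref{prop:w} to get $\bw=(K+\lambda I)\balpha$, and evaluate \eqref{Bed} at $z=a_k$ to obtain $(K\balpha)_k=B_n(a_k)-1/\overline{B_n(0)}$. Your caveat about the constant term is accurate and is left implicit in the paper's proof: the constant $K(\cdot,0)$ is not in the span of the $K(\cdot,a_j)$, so the minimizer obtained this way is $B_n-1/\overline{B_n(0)}$ rather than $B_n$ itself, and this only shows up in the paper's subsequent example, where $B_n$ minus this constant is plotted.
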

\begin{proof}
	By the complex representer theorem, see Theorem \ref{thm:crt},  the minimizer $f_*$ is given by 
	\begin{equation}
		\label{star}
		f_* (z) = \sum_{j=1}^{n} c_j K(z, z_j).
	\end{equation}
	We take as input set $\{z_j\}_{j=1}^{n} = \{a_j\}_{j=1}^{n}$, i.e., the roots of the Blaschke product. By Proposition \ref{prop:w} we know that  the output set $ \{w_k\}_{1 \leq k \leq n}$ of the Problem \ref{HardyP} are given by
	\begin{equation}
		\label{sh}
		\bw = (K+\lambda I)\balpha,
	\end{equation}
	where
	$$ \balpha = (c_1(n,a),\dots, c_n(n,a))^T, \qquad K=\left(K(a_k,a_j)\right)_{0 \leq k, j \leq n}=\left(\frac{1}{1-a_k \overline{a_j}}\right)_{1 \leq k, j \leq n}.$$
	Thus by the identity \eqref{Bed}, we have
	
	$$K\balpha =\sum_{j=1}^{n} c_j K(a_k,a_j)=\sum_{j=1}^{n} \frac{1}{\bar a_j \overline{B'_n(a_j)}} \frac{1}{1-a_k\bar{a_j}}= B_n(a_k) - \frac{1}{\overline{B_n(0)}}.$$
	So by \eqref{sh} we have
	$$
	w_k = K\balpha + \lambda \alpha_k = B_n(a_k) + \lambda \frac{1}{\bar a_k \overline{B'_n(a_k)}} - \frac{1}{\overline{B_n(0)}}, \qquad 1 \leq k \leq n.
	$$
	
\end{proof}

\begin{example}
The solution to the minimization Problem \ref{HardyP} represents the separating hyperplane of the data; in this case, it is the finite Blaschke product. 
However, since we can easily plot only a pair $(z_k, w_k)$ for real values, we must consider two cases: (1) $a_k$ real (Figure \ref{fig:awesome_image1}), or (2) $a_k$ purely imaginary  (Figure \ref{fig:awesome_image2}), as inputs to calculate the real outputs $w_k$ via \eqref{eq:bl_wk}. 

For $n = 10$ and $\lambda = 1$, we visualize how the finite Blaschke product $B_n(z)$, with randomly generated roots $a_1, \dots, a_{10}$, separates datasets of the form $\{(a_k, w_k)\}_{k=1}^{100}$, where the $w_k$ are calculated using \eqref{eq:bl_wk}.
We plot the Blaschke-based separating function
$
f_*(z) \;=\; B(z)_{10} \;-\; \frac{1}{B(0; z_k)}
$
evaluated at the sample points $z_k = a_k$ for $k=1,\cdots n$ .

\begin{figure}[H]
\minipage{0.5\textwidth}
\includegraphics[width=0.9\linewidth]{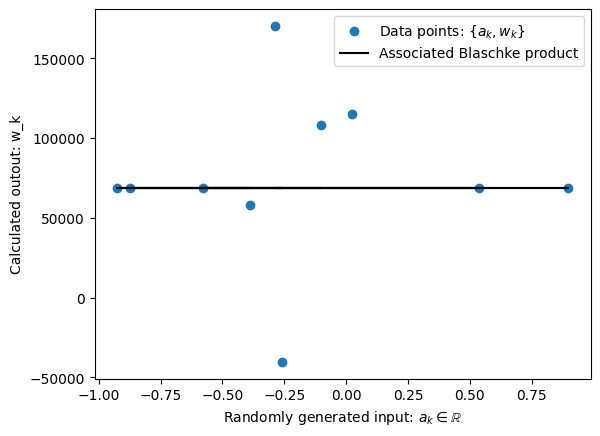}
\caption{Case (1) The Blaschke product separating ten randomly generated data points, in which the $ x$-coordinate is a randomly generated real number in $[-1,1]$, and the $y$-coordinate is calculated via \eqref{eq:bl_wk}. }\label{fig:awesome_image1}
\endminipage
\minipage{0.5\textwidth}
\includegraphics[width=0.9\linewidth]{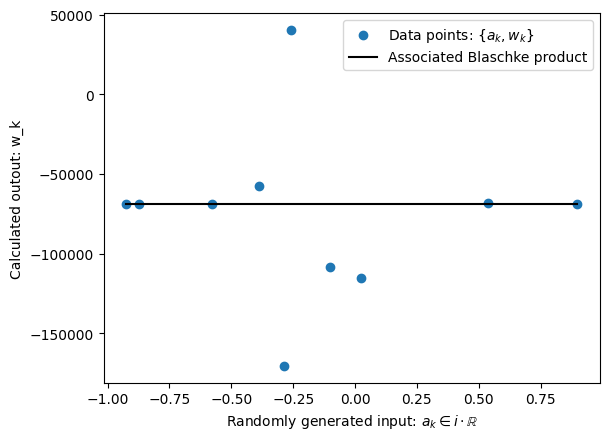}
\caption{Case (20 The Blaschke product separating ten randomly generated data points, in which the $ x$-coordinate is randomly generated as a purely imaginary number in $[-i,i]$, and the $y$-coordinate is calculated via \eqref{eq:bl_wk}.}\label{fig:awesome_image2}
\endminipage
\end{figure}
\end{example}

\section{Schrödinger Equation For The Free Particle: The RBF Superoscillations}

The time evolution of superoscillations has been extensively studied as initial conditions in the time-dependent Schrödinger equation. The first such study focused on the free particle problem, as in \cite{ACSST}. Subsequently, evolution problems with non-constant potentials were analysed in depth, including the quantum harmonic oscillator \cite{ACSS, BCSS, CSSY}, an electric field \cite{ACSSTmp}, and a uniform magnetic field \cite{CGS}. The evolution of superoscillations has also been considered for the Schrödinger equation describing spinning particles of arbitrary spin subjected to a magnetic field \cite{CPSW}. In \cite{CPSW2}, the authors investigated evolution problems for the Aharonov-Bohm potential. Superoscillation evolution have further been explored in connection with the Klein-Gordon equation \cite{ACSST1, DV}. Finally, a general framework for the evolution of superoscillations was provided in \cite{ABCS}. 
\newline
\newline
In this section, we study the free particle problem by taking as initial datum the RBF superoscillations of the first type restricted to the real line. Our goal is to determine the solution of the following Cauchy problem:

\begin{equation}
	\label{star3}
	i\frac{\partial \psi(x,t)}{\partial t}=-\frac{\partial^2\psi(x,t)}{\partial x^2}\psi(x,t), \quad \psi(x,0)=r_n(x)=e^{-\frac{x^2}{2}} f_n(x),
\end{equation}
see \eqref{r1} for the definition of $r_n(x)$, and \eqref{gensuper} for a definition of $f_n(x)$.

\begin{theorem}
	\label{Fourier1}
	Let $n \in \mathbb{N}$, $a>1$ and let $\{h_j(n)\}_{0\leq j\leq n}$ be a real-valued sequence satisfying $ |h_j(n)| \leq 1$ for all $j \in \mathbb{N}_0$. Then the solution of \eqref{star3} is given by
	
	\begin{equation}
		\psi_n(x,t)=\frac{e^{-\frac{x^2}{2(1+2it)}}}{(1+2it)^{1/2}}\sum_{j=0}^{n}Z_j(n,a) e^{-\frac{h_j^2(n)}{2}+\frac{h_j^2(n)}{2(1+2it)}+ix\frac{h_j(n)}{1+2it}}.
	\end{equation}
    where $Z_j(n,a)$ are complex values coefficients.
\end{theorem}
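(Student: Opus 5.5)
The plan is to reduce the Cauchy problem \eqref{star3} to a finite superposition of Gaussian wave packets. Since the equation is linear, each packet can be evolved by a Gaussian ansatz and the results summed. I read the equation as the free Schr\"odinger equation $i\partial_t\psi=-\partial_x^2\psi$, i.e. $\partial_t\psi=i\,\partial_x^2\psi$; the trailing factor $\psi(x,t)$ on the right-hand side of \eqref{star3} looks like a misprint.

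\textbf{Step 1: simplify the initial datum.} Using the definition \eqref{r1} with $\gamma=\sqrt2$ and $\overline{-ih_j(n)}=ih_j(n)$, each term is
\[
e^{-\frac{h_j^2(n)}{2}}K_{\sqrt2}(x,-ih_j(n))=e^{-\frac{h_j^2(n)}{2}}e^{-\frac{(x-ih_j(n))^2}{2}}=e^{-\frac{x^2}{2}+ih_j(n)x},
\]
so that $r_n(x)=\sum_{j=0}^n Z_j(n,a)\,e^{-\frac{x^2}{2}+ih_j(n)x}$. This is consistent with Lemma \ref{rbfsuper}, which gives $r_n=e^{-x^2/2}f_n$. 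By linearity it therefore suffices to solve the problem with the single datum $\varphi_h(x)=e^{-x^2/2+ihx}$ for a real $h$.

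\textbf{Step 2: evolve one packet.} Write $\varphi_h(x)=e^{-h^2/2}e^{-(x-ih)^2/2}$ and try the ansatz
\[
\psi_h(x,t)=\frac{e^{-h^2/2}}{(1+2it)^{1/2}}\exp\Big(-\frac{(x-ih)^2}{2(1+2it)}\Big).
\]
To verify it, put $\sigma=1+2it$ and $y=x-ih$. The function $u=\sigma^{-1/2}e^{-y^2/(2\sigma)}$ is the standard heat-type Gaussian, satisfying $\partial_\sigma u=\tfrac12\partial_y^2u$. Since $\partial_t=2i\,\partial_\sigma$ and $\partial_y=\partial_x$, this gives $\partial_t\psi_h=i\,\partial_x^2\psi_h$, and at $t=0$ we recover $\varphi_h$. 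This identity is the one real computation and amounts to a direct differentiation. Alternatively, one can Fourier transform, multiply by $e^{-i\xi^2t}$, and invert a Gaussian integral; this has the advantage of giving the solution in the $L^2$ class where uniqueness is standard.

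\textbf{Step 3: expand and sum.} Expanding the exponent,
\[
-\frac{(x-ih)^2}{2(1+2it)}=-\frac{x^2}{2(1+2it)}+\frac{ihx}{1+2it}+\frac{h^2}{2(1+2it)},
\]
so that
\[
\psi_h(x,t)=\frac{e^{-\frac{x^2}{2(1+2it)}}}{(1+2it)^{1/2}}\,e^{-\frac{h^2}{2}+\frac{h^2}{2(1+2it)}+\frac{ixh}{1+2it}}.
\]
Summing $Z_j(n,a)\psi_{h_j(n)}$ over $j=0,\dots,n$ and factoring out the common $x$-dependent Gaussian yields exactly the stated formula for $\psi_n(x,t)$.

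\textbf{Main obstacle.} The only points needing care are the branch of $(1+2it)^{1/2}$, which should be the principal branch, continuous in $t$ and equal to $1$ at $t=0$, and the justification that this is \emph{the} solution. For the latter I would note that the datum is a finite sum of Schwartz functions, and invoke uniqueness of $L^2$ (or tempered) solutions of the free Schr\"odinger equation via the Fourier transform.
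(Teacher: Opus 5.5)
Your argument is correct, but it takes a different route from the paper.

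The paper derives the formula. It Fourier transforms \eqref{star3} and solves $\widehat\psi(\lambda,t)=c(\lambda)e^{-i\lambda^2 t}$. It then computes $c(\lambda)=\sqrt{2\pi}\sum_j Z_j(n,a)e^{-(\lambda-h_j(n))^2/2}$ and inverts using the complex Gaussian integral $\int_{\mathbb R}e^{-\alpha^2x^2+\beta x}\,dx=\frac{\sqrt\pi}{\alpha}e^{\beta^2/(4\alpha^2)}$ with $\alpha^2=\frac12+it$ and $\beta=ix+h_j(n)$.

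You instead guess the answer and verify it. You write each term of the datum as $e^{-h^2/2}e^{-(x-ih)^2/2}$ and recognize the evolved packet as the heat kernel $\sigma^{-1/2}e^{-y^2/(2\sigma)}$ continued to complex time $\sigma=1+2it$ with complex shift $y=x-ih$. The PDE then reduces to the one-line identity $\partial_\sigma u=\frac12\partial_y^2u$ together with $\partial_t=2i\,\partial_\sigma$.

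What your route buys:
\begin{itemize}
\item It is shorter and explains why the answer has this shape.
\item It avoids Fourier inversion and the complex-parameter Gaussian integral, whose correctness depends on taking $\alpha$ with $\mathrm{Re}\,\alpha>0$. The paper leaves that choice implicit, while you state the principal-branch choice explicitly.
\end{itemize}

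What it costs:
\begin{itemize}
\item Verification only gives existence. The definite article in ``the solution'' therefore rests entirely on the separate $L^2$/tempered uniqueness theorem you invoke.
\item The paper's Fourier computation gets uniqueness essentially for free, since any $L^2$ solution must have Fourier transform $c(\lambda)e^{-i\lambda^2t}$. It also produces the formula without needing an ansatz.
\end{itemize}

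Your reading of the trailing factor $\psi(x,t)$ in \eqref{star3} as a misprint matches how the paper's proof treats the equation, as the linear free Schr\"odinger equation.
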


\begin{proof}
	In order to solve \eqref{star3} we apply the following Fourier transform 
	
	$$ \widehat{f}(\omega)=\mathcal{F}(f)(\omega)= \int_{\mathbb{R}} e^{-it \omega} f(t)dt, \quad f \in L^1(\mathbb{R}) \cap L^2(\mathbb{R}), \quad \omega \in \mathbb{R},$$
	
	to \eqref{star3}, and we get 
	\[ 
	i\frac{\partial}{\partial t } \widehat{\psi}(\lambda,t) = \lambda^2 \widehat{\psi}(\lambda,t). 
	\]
	It follows that 
	\begin{equation}
		\label{six}
		\widehat{\psi}(\lambda,t) =  c(\lambda) e^{-i\lambda^2 t} 
	\end{equation}
	where the constant $c(\lambda)$ can be calculated using the initial condition so that $\hat{\psi}(\lambda,0) = c(\lambda)$ for all $\lambda \in \mathbb{R}$. Thus, by the definition of Fourier transform we have
	
	\begin{align}
		\nonumber
		c(\lambda) & = \int_{\mathbb{R}} e^{-i\lambda x}\psi(x,0)dx\\
		\nonumber
		&=\sum_{j=0}^nZ_j(n,a) \int_{\mathbb{R}} e^{-\frac{x^2}{2} +ix\left( h_j(n)-\lambda  \right) }dx\\
		\nonumber
		& = \sum_{j=0}^nZ_j(n,a) \mathcal{F}\left[e^{-\frac{x^2}{2}}\right] \left(\lambda -h_j(n)\right) \\
		\nonumber
		&= \sqrt{2\pi}\sum_{j=0}^{n} Z_j(n,a) e^{- \frac{ (\lambda - h_j(n) )^2}{2}} \\
		\label{new1}
		& = \sqrt{2\pi}e^{-\frac{\lambda^2}{2}} \sum_{j=0}^{n}C_j(n,a) e^{\frac{h_j^2(n)}{2}}e^{\lambda h_j(n)}.
	\end{align}
	
	Hence by replacing \eqref{new1} into \eqref{six} we have
	\[
	\widehat{\psi_n}(\lambda,t) 
	= 
	\sqrt{2\pi}e^{-\frac{\lambda ^2}{2} -i\lambda^2t }  \sum_{j=0}^{n} C_j(n,a) e^{-\frac{h_j^2(n)}{2}}e^{\lambda h_j(n)}.
	\]
	So using the inverse Fourier transform we have 
	
	\begin{align}
		\nonumber
		\psi_n(x,t) & = \frac{1}{2\pi} \int_{\mathbb{R}} e^{i\lambda x} \widehat{\psi_n}(\lambda , t ) d\lambda \\ \nonumber
		&=\frac{1}{\sqrt{2\pi}} \int_{\mathbb{R}} e^{i\lambda x}
		\left(
		e^{-\frac{\lambda ^2}{2} -i\lambda^2t }  \sum_{j=0}^{n} Z_j(n,a) e^{-\frac{h_j^2(n)}{2}}e^{\lambda h_j(n)}
		\right) d\lambda\\
		\nonumber
		&=\frac{1}{\sqrt{2\pi}} \sum_{j=0}^{n}Z_j(n,a) e^{-\frac{h_j^2(n)}{2}}\int_{\mathbb{R}}e^{-\frac{\lambda^2}{2}-i\lambda^2 t +i\lambda x + \lambda h_j(n)}d\lambda\\
		\label{final}
		&=\frac{1}{\sqrt{2\pi}} \sum_{j=0}^{n}Z_j(n,a) e^{-\frac{h_j^2(n)}{2}}
		\int_{\mathbb{R}}e^{ -\lambda^2 \left(\frac{1}{2} + it\right) +\lambda \left(ix +h_j(n)\right) } d\lambda.
	\end{align}

	Recall the following integral formula for $\text{Re}(\alpha^2)>0,\beta\in\mathbb{C}$ as
\begin{equation}
\label{ref}
\int_{\mathbb{R}} e^{-\alpha^2 x^2 +\beta x} dx  = e^{\frac{\beta^2}{4\alpha^2}} \frac{\sqrt{\pi}}{\alpha},
\end{equation}
	see \cite[formula 3.323]{GR}. By taking $\alpha^2=\frac{1}{2}+it$ and $\beta = ix + h_j(n)$, we have
	\begin{equation}
		\label{intG}
		\int_{\mathbb{R}}e^{ -\lambda^2 \left(\frac{1}{2} + it\right) +\lambda \left(ix +\omega_j\right) } d\lambda
		=
		\frac{\sqrt{\pi} e^{ \frac{\left(ix+h_j(n)\right)^2 }{2+4it} } }{ \left(\frac{1}{2}+it\right)^{\frac{1}{2}}  }
		=
		\frac{\sqrt{\pi}}{\left(\frac{1}{2}+it\right)^{\frac{1}{2}}} e^{-\frac{x^2 }{2(1+2it)}}
		e^{\frac{h_j^2(n) + 2ixh_j(n) }{2(1+2it)}}
	\end{equation}
	
	By plugging \eqref{intG} into \eqref{final} we get the result.
\end{proof}

\begin{remark}
	The Cauchy problem \eqref{star3} has also been investigated in \cite{ADDS2024}, where the authors employed the notion of the short-time Fourier transform to solve the problem.
\end{remark}

\begin{remark}
	From the identities
	$$
	\left| e^{ -\frac{x^2}{2(1+2it)} } \right| = e^{-\frac{x^2}{2(1+4t^2)}}, 
	\qquad
	\left| \frac{1}{(1+2it)^{\frac{1}{2}}} \right| = \frac{1}{\sqrt{\sqrt{ 1+4t^2 }}},
	$$
	we deduce that
	$$
	|\psi_n(x,t)| \leq c(t)\, e^{-\frac{x^2}{2(1+4t^2)}},
	$$
	for some function $c(t)>0$. Hence,
	$$
	\int_{\mathbb{R}} |\psi_n(x,t)|^2 \, dx < \infty,
	$$
	which shows that $\psi_n\in L^2(\mathbb{R})$.
\end{remark}

An interesting generalization of the Cauchy problem \eqref{star3} can be obtained by considering a more general initial datum of the form
$$
\mathbf{h}_k(x) f_n(x), \qquad x \in \mathbb{R}, 
$$
where $ \mathbf{h}_k(x)$ denotes the $k$-th Hermite function, defined by
\begin{equation}
\label{hermit}
\mathbf{h}_k(x) = e^{-\frac{x^2}{2}} H_k(x), \qquad H_k(x) = k! \sum_{m=0}^{\lfloor \frac{k}{2} \rfloor} \frac{(-1)^m}{m! (k-2m)!} (2x)^{k-2m}.
\end{equation}
and $H_k(x)$ are the Hermite polynomials of degree $k$. More precisely, we aim to study the following Cauchy problem:
\begin{equation}
	\label{star4}
	i \frac{\partial \phi_{k,n}(x,t)}{\partial t} = -\frac{\partial^2 \phi_{k,n}(x,t)}{\partial x^2}, 
	\qquad \phi_{k,n}(x,0) = \mathbf{h}_k(x) F_n(x,a).
\end{equation}

\begin{remark}
The Cauchy problem \eqref{star4} generalizes \eqref{star3}. Indeed, for $k=0$ we have $h_0(x) = e^{-x^2/2}$, and therefore the initial datum in \eqref{star4} reduces to that of \eqref{star3}.
\end{remark}

\begin{remark}
	The Hermite polynomials defined in \eqref{hermit} can be extended to the holomorphic setting by replacing the real variable $x$ with the complex variable $z$.
     Moreover the following properties of the Hermite polynomials are still valid for $z$, $w \in \mathbb{C}$:
	\begin{equation}
	\label{Taylor}
	H_k(z+w)= \sum_{\ell=0}^k \binom{k}{\ell} H_k(z) (2w)^{k-\ell},
	\end{equation}
	\begin{equation}
	\label{recc}
	H_k(\gamma z)= \gamma^k\sum_{\ell=0}^{\lfloor \frac{k}{2}\rfloor} \left(\frac{\gamma^2-1}{\gamma^2}\right)^\ell \binom{k}{2 \ell} \frac{(2\ell)!}{\ell!} H_{k-2\ell}(z), \qquad \gamma \in \mathbb{C}.
	\end{equation}
\end{remark}

Before addressing the solution of \eqref{star4}, we need to establish an important preliminary result. We recall that the Hermite polynomials have the following integral representation

\begin{equation}
	\label{hermite}
	\frac{H_n(ix)}{(2i)^n} = \frac{1}{\sqrt{\pi}} \int_{-\infty}^{\infty} e^{-(y-x)^2} y^n \, dy, , \qquad n \in \mathbb{N}_0,
\end{equation}

see \cite[Theorem 4.6.6]{Mourad}. The above property of the Hermite polynomials is useful to show the following result.

\begin{lemma}
	\label{inteH}
	Let $a$, $b \in \mathbb{C} \setminus \{0\}$ with $\hbox{Re}(a)>0$, then for $n \in \mathbb{N}_0$ we have
\begin{equation}
\label{inte1}
	\int_{-\infty}^{\infty} x^n e^{-ax^2+bx} dx = \sqrt{\frac{\pi}{a}} e^{\frac{b^2}{4a}}(-i)^n   \left(\frac{1}{2\sqrt{a}}\right)^n  H_n \left(\frac{ib}{2\sqrt{a}}\right). 
\end{equation}
Moreover, if $a \neq 1$ we have
\begin{equation}
\label{inte2}
\int_{\mathbb{R}} e^{-ax^2+bx}H_k(x-c)dx=i^k \sqrt{\frac{\pi}{a}} e^{\frac{b^2}{4a}} \left(\frac{1-a}{a}\right)^{\frac{k}{2}} H_k \left(\frac{2iac-ib}{2 \sqrt{a}\sqrt{1-a}}\right)     \qquad c>0.
\end{equation}
\end{lemma}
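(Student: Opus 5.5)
We treat the two identities separately. For \eqref{inte1} we complete the square and reduce to the integral representation \eqref{hermite}. For \eqref{inte2} we use the generating function of the Hermite polynomials together with \eqref{ref}.

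\emph{Proof of \eqref{inte1}.} Write $-ax^2+bx=-a\bigl(x-\tfrac{b}{2a}\bigr)^2+\tfrac{b^2}{4a}$ and substitute $y=\sqrt{a}\,x$, with $\sqrt{a}$ the principal branch, which has positive real part since $\mathrm{Re}(a)>0$. This gives
\[
\int_{\mathbb R} x^n e^{-ax^2+bx}\,dx=a^{-\frac{n+1}{2}}e^{\frac{b^2}{4a}}\int_{\mathbb R} y^n e^{-(y-u)^2}\,dy,\qquad u=\frac{b}{2\sqrt a}.
\]
The substitution moves the integration line to the ray $\sqrt a\,\mathbb R$. We rotate it back to $\mathbb R$ by Cauchy's theorem, using Gaussian decay in the sector $|\arg y|<\pi/4$. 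Next we extend \eqref{hermite} from $x\in\mathbb R$ to complex $u$. Both sides are entire in $u$: the right-hand side by differentiation under the integral with dominated convergence. They agree on $\mathbb R$, so by the identity theorem
\[
\int_{\mathbb R} y^n e^{-(y-u)^2}\,dy=\sqrt\pi\,\frac{H_n(iu)}{(2i)^n}.
\]
Inserting this and using $a^{-\frac{n+1}{2}}(2i)^{-n}=a^{-1/2}(-i)^n\bigl(\tfrac{1}{2\sqrt a}\bigr)^n$ gives \eqref{inte1}.

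\emph{Proof of \eqref{inte2}.} We use the generating function $e^{2xt-t^2}=\sum_{k\ge0}H_k(x)\frac{t^k}{k!}$ with $x$ replaced by $x-c$. Multiply by $e^{-ax^2+bx}$ and integrate. Summation and integration may be interchanged by dominated convergence, since for $|t|\le R$ the partial sums are bounded by $e^{2R|x-c|+R^2}$, which is integrable against $e^{-\mathrm{Re}(a)x^2+\mathrm{Re}(b)x}$. By \eqref{ref} (equivalently \eqref{inte1} with $n=0$),
\[
\sum_{k\ge0}\frac{t^k}{k!}\int_{\mathbb R}e^{-ax^2+bx}H_k(x-c)\,dx=\sqrt{\tfrac{\pi}{a}}\,e^{\frac{b^2}{4a}}\exp\Bigl(t\,\tfrac{b-2ac}{a}+t^2\,\tfrac{1-a}{a}\Bigr).
\]
Now set $s=i\,t\sqrt{\tfrac{1-a}{a}}$, which requires $a\neq1$, so that $t^2\frac{1-a}{a}=-s^2$. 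Then the exponent becomes $2sX-s^2$ with
\[
X=\frac{b-2ac}{2a\,i\sqrt{(1-a)/a}}=\frac{2iac-ib}{2\sqrt a\sqrt{1-a}}.
\]
Applying the generating function once more, the right-hand side equals $\sqrt{\pi/a}\,e^{b^2/4a}\sum_k H_k(X)\,i^k\bigl(\tfrac{1-a}{a}\bigr)^{k/2}\frac{t^k}{k!}$. Comparing coefficients of $t^k$ yields \eqref{inte2}.

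\emph{Main obstacle.} The delicate point is the bookkeeping of branches. The roots $\sqrt a$, $\sqrt{1-a}$ and $\bigl(\tfrac{1-a}{a}\bigr)^{1/2}$ must be chosen consistently, so that $\sqrt{(1-a)/a}=\sqrt{1-a}/\sqrt a$; we fix principal branches and, where this identity may fail, note that only even powers of $s$ pair with the branch-dependent factor. The parity $H_k(-X)=(-1)^kH_k(X)$ then shows the product $\bigl(\tfrac{1-a}{a}\bigr)^{k/2}H_k(X)$ is independent of the branch chosen. The hypothesis $c>0$ plays no role.
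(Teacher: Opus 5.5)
Your proof is correct. For \eqref{inte1} you follow the paper's route: complete the square, substitute $y=\sqrt a\,x$, and apply \eqref{hermite}. You also supply two justifications the paper leaves implicit. One is the rotation of the line $\sqrt a\,\mathbb{R}$ back to $\mathbb{R}$. The other is the extension of \eqref{hermite} to complex arguments, which the paper needs because it evaluates \eqref{hermite} at the complex point $it$ without comment.

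For \eqref{inte2} your route is genuinely different. The paper stays with finite sums. It expands $H_k(x-c)$ by the explicit formula and the binomial theorem, applies \eqref{inte1} term by term, and rearranges. It then collapses the double sum with the addition formula \eqref{Taylor}, followed by the multiplication formula \eqref{recc} with $\gamma=(1-a)^{-1/2}$.

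You instead integrate the generating function $e^{2(x-c)t-t^2}$ against the Gaussian, evaluate with \eqref{ref}, and rescale $t$. Both the addition and multiplication theorems are consequences of the generating function, so this does in a single computation what the paper does in several steps. The price is an interchange of sum and integral, which your majorant $e^{2R|x-c|+R^2}$ justifies; the paper's finite-sum route needs no such analytic step.

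Your version also shows that the hypotheses $c>0$ and $b\neq 0$ are superfluous. It shows too that only the sign-invariant combination $\rho^{k}H_k\left(\frac{b-2ac}{2ia\rho}\right)$, with $\rho^2=(1-a)/a$, enters.

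On branches, your hedge ``where this identity may fail'' is unnecessary. For $\mathrm{Re}\,a>0$ and $a\neq 1$ one has $\mathrm{Arg}(1-a)-\mathrm{Arg}(a)\in(-\pi,\pi]$, so principal branches automatically give $\sqrt{(1-a)/a}=\sqrt{1-a}/\sqrt a$. Note also that the parity argument only covers a simultaneous sign change of the root in the prefactor and in the argument of $H_k$. An inconsistent choice between the two would cost a factor $(-1)^k$.
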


\begin{proof}
	We set $t = \frac{b}{2\sqrt{a}}$, so we have
	\begin{equation}\label{rewrite}
		-ax^2 +bx = -a \left(  x-\frac{t}{\sqrt{a}}  \right)^2 +t^2.
	\end{equation}
	This implies that 
	$$
	\int_{-\infty}^{\infty} x^n e^{-ax^2+bx}dx = \int_{-\infty}^{\infty}x^n e^{t^2}e^{-a \left( x-\frac{t}{\sqrt{a}} \right)^2 }dx
	$$
	By making the change of variable $y = \sqrt{a} \left( x - \frac{t}{\sqrt{a}} \right)$ and by using \eqref{hermite}  we obtain
	\begin{align}
		\nonumber
		\int_{-\infty}^{\infty} e^{t^2}\left(\frac{y+t}{\sqrt{a}} \right)^n e^{-y^2} \frac{1}{\sqrt{a}} dy
		& = e^{t^2}\int_{-\infty}^{\infty} (y+t)^n e^{-y^2} a^{-\frac{n+1}{2}} dy\\
		\nonumber
		\nonumber
		&=  e^{t^2} a^{-\frac{n+1}{2}}\int_{-\infty}^{\infty} x^n e^{-(x-t)^2} dx\\
		\label{inte}
		&=  e^{t^2} a^{-\frac{n+1}{2}} \frac{\sqrt{\pi}}{2^n} (-i)^n H_n(it).
	\end{align}
Inserting in \eqref{inte} the value of $t$ we get the final result.
\\Now, we prove \eqref{inte2}. By definition of the Hermite polynomials, the binomial theorem and \eqref{inte1} we have
\begin{eqnarray*}
\int_{\mathbb{R}} e^{-ax^2+bx}H_k(x-c)dx&=&k! \sum_{m=0}^{\lfloor \frac{k}{2} \rfloor} \frac{(-1)^m 2^{k-2m}}{m!(k-2m)!} \int_{\mathbb{R}} e^{-ax^2+bx}(x-c)^{k-2m} dx\\
&=&k! \sum_{m=0}^{\lfloor \frac{k}{2} \rfloor} \frac{(-1)^m 2^{k-2m}}{m!(k-2m)!} \sum_{\ell=0}^{k-2m} \binom{k-2m}{\ell} (-c)^{k-2m-\ell} \int_{\mathbb{R}} e^{-ax^2+bx}x^{\ell} dx\\
&=& k! \sqrt{\frac{\pi}{a}} e^{\frac{b^2}{4a}} \sum_{m=0}^{\lfloor \frac{k}{2} \rfloor} \frac{(-1)^m 2^{k-2m}}{m!(k-2m)!} \sum_{\ell=0}^{k-2m} \binom{k-2m}{\ell} \frac{(-i)^\ell}{(2 \sqrt{a})^\ell}(-c)^{k-2m-\ell} H_\ell \left(\frac{ib}{2 \sqrt{a}}\right).
\end{eqnarray*}
By making some algebraic arguments we get the following:
\begin{eqnarray*}
&&\int_{\mathbb{R}} e^{-ax^2+bx}H_k(x-c)dx= i^k k! \sqrt{\frac{\pi}{a}} e^{\frac{b^2}{4a}} \sum_{m=0}^{\lfloor \frac{k}{2} \rfloor} \frac{2^{k-2m}}{m!(k-2m)!} \frac{1}{(2\sqrt{a})^{k-2m}} \times\\
&& \qquad \qquad \qquad \qquad \qquad \qquad \times \sum_{\ell=0}^{k-2m} \binom{k-2m}{\ell}(2ic\sqrt{a})^{k-2m-2\ell} H_{\ell} \left(-\frac{i b}{2 \sqrt{a}}\right).
\end{eqnarray*}
Now, by \eqref{Taylor} we have
$$\int_{\mathbb{R}} e^{-ax^2+bx}H_k(x-c)dx= i^k \sqrt{\frac{\pi}{a}} e^{\frac{b^2}{4a}} \sum_{m=0}^{\lfloor \frac{k}{2} \rfloor} \frac{k!}{m!(k-2m)!} \frac{1}{(\sqrt{a})^{k-2m}} H_{k-2m} \left(\frac{2iac-ib}{2 \sqrt{a}}\right).$$
Finally by using \eqref{recc}, with $\gamma=\sqrt{\frac{1}{1-a}}$ we have
$$\int_{\mathbb{R}} e^{-ax^2+bx}H_k(x-c)dx=i^k \sqrt{\frac{\pi}{a}} e^{\frac{b^2}{4a}} \left(\frac{1-a}{a}\right)^{\frac{k}{2}} H_k \left(\frac{2iac-ib}{2 \sqrt{a}\sqrt{1-a}}\right).$$
This proves the result.

\end{proof}

\begin{remark}
By taking $n=0$ in \eqref{inte1}, we recover the integral used in \eqref{ref}.
\end{remark}

Now, we have all the tools to solve the problem \eqref{star4}.

\begin{theorem}
	
Let  $n \in \mathbb{N}$, $k\in \mathbb{N}_0$, and $\{h_j(n)\}$ a real-valued sequence satisfying $|h_j(n)| \leq 1$ for all $0 \leq j \leq n$. The solution of the  the Cauchy problem \eqref{star4} is given by
	
	$$
	\phi_{k,n}(x,t)= \frac{ (1-2it)^{k}}{5^{\frac{k}{2}} \sqrt{1+2it}} e^{- \frac{x^2}{2(1+2it)}}\sum_{j=0}^{n} Z_j(n,a) e^{-\frac{h_j^2(n)}{2} +\frac{h_j^2(n)}{2(1+2it)}+\frac{ix h_j(n)}{1+2it}} H_k \left(\frac{x-2h_j(n)t}{\sqrt{1+4t^2}}\right).
	$$
\end{theorem}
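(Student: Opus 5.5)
The plan is to follow the Fourier-transform scheme used for Theorem \ref{Fourier1}. Since the equation is linear, it suffices to treat one exponential at a time: we solve the problem with initial datum $\mathbf{h}_k(x)e^{ih_j(n)x}$ and then sum against the coefficients $Z_j(n,a)$. We read $F_n(x,a)$ in \eqref{star4} as the general sequence $f_n(x)=\sum_j Z_j(n,a)e^{ih_j(n)x}$.

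For fixed $j$, write $h=h_j(n)$. Applying the Fourier transform in $x$ turns the equation into $i\partial_t\widehat{\phi}=\lambda^2\widehat{\phi}$, so $\widehat{\phi}(\lambda,t)=c(\lambda)e^{-i\lambda^2t}$. The initial coefficient is
\[
c(\lambda)=\int_{\mathbb{R}}e^{-\frac{x^2}{2}+ix(h-\lambda)}H_k(x)\,dx .
\]
We do not expand this with Lemma \ref{inteH}. Instead we use the classical fact that $\mathbf{h}_k$ is an eigenfunction of the Fourier transform, $\widehat{\mathbf{h}_k}(\xi)=\sqrt{2\pi}(-i)^k\mathbf{h}_k(\xi)$. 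Together with the modulation rule this gives $c(\lambda)=\sqrt{2\pi}(-i)^k e^{-\frac{(\lambda-h)^2}{2}}H_k(\lambda-h)$.

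Next we invert:
\[
\phi(x,t)=\frac{(-i)^k}{\sqrt{2\pi}}\int_{\mathbb{R}}e^{-\lambda^2(\frac12+it)+\lambda(ix+h)-\frac{h^2}{2}}H_k(\lambda-h)\,d\lambda .
\]
This integral has exactly the form \eqref{inte2}, with $a=\tfrac12+it$, $b=ix+h$ and $c=h$. We have $\mathrm{Re}(a)>0$ and $a\neq1$. The requirement $c>0$ in Lemma \ref{inteH} is inessential, since both sides are polynomial in $c$; we either note this or apply \eqref{inte1} directly after expanding $H_k(\lambda-h)$ via \eqref{Taylor}. The Gaussian prefactor $\sqrt{\pi/a}\,e^{b^2/4a}$ then reproduces, exactly as in \eqref{intG}, the factor
\[
\frac{e^{-\frac{x^2}{2(1+2it)}}}{\sqrt{1+2it}}\, e^{-\frac{h^2}{2}+\frac{h^2}{2(1+2it)}+\frac{ixh}{1+2it}} .
\]

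The main obstacle is simplifying the Hermite factor $i^k\big(\frac{1-a}{a}\big)^{k/2}H_k\big(\frac{2iah-ib}{2\sqrt a\sqrt{1-a}}\big)$. Here $1-a=\tfrac12-it$, so $\frac{1-a}{a}=\frac{1-2it}{1+2it}$ and $4a(1-a)=1+4t^2$. The argument becomes
\[
\frac{2i(\frac12+it)h-i(ix+h)}{\sqrt{1+4t^2}}=\frac{x-2ht}{\sqrt{1+4t^2}},
\]
which matches the argument of $H_k$ in the statement. The prefactor combines $(-i)^k i^k=1$ with $\big(\frac{1-2it}{1+2it}\big)^{k/2}$. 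I would carefully track branches of the square roots here. I would also check the claimed constant $(1-2it)^k/5^{k/2}$ against this: setting $t=0$ should return $H_k(x)$, which forces the constant to equal $1$ at $t=0$. That check would detect any misprint in the normalization, such as the $5^{k/2}$. Finally, we sum over $j$ with weights $Z_j(n,a)$ and verify that the $k=0$ case recovers Theorem \ref{Fourier1}.
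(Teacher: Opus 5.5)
Your route is the paper's own. You take the Fourier transform in $x$, use the eigenfunction property $\widehat{\mathbf{h}_k}=\sqrt{2\pi}(-i)^k\mathbf{h}_k$ together with modulation to get $c(\lambda)$, invert, and apply \eqref{inte2} with $a=\tfrac12+it$, $b=ix+h_j(n)$, $c=h_j(n)$. Your remark that the hypothesis $c>0$ is inessential (both sides are polynomials in $c$) improves on the paper. The paper applies \eqref{inte2} with $c=h_j(n)$, which can be zero or negative.

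The last step is where you hedge, and carrying it out shows that the statement as printed is false for every $k\geq 1$. So it cannot be proved as written. Since $1-a=\overline a$, principal branches give $\sqrt{1-a}=\overline{\sqrt a}$. Hence $2\sqrt a\sqrt{1-a}=2|a|=\sqrt{1+4t^2}$ and $\left(\frac{1-a}{a}\right)^{1/2}=\frac{\overline{\sqrt a}}{\sqrt a}=\frac{\overline a}{|a|}=\frac{1-2it}{\sqrt{1+4t^2}}$. This matches the $\sqrt a\sqrt{1-a}$ in the argument, so there is no branch ambiguity. What your argument actually proves is
\[
\phi_{k,n}(x,t)=\frac{(1-2it)^k}{(1+4t^2)^{k/2}\sqrt{1+2it}}\,e^{-\frac{x^2}{2(1+2it)}}\sum_{j=0}^n Z_j(n,a)\,e^{-\frac{h_j^2(n)}{2}+\frac{h_j^2(n)}{2(1+2it)}+\frac{ixh_j(n)}{1+2it}}\,H_k\left(\frac{x-2h_j(n)t}{\sqrt{1+4t^2}}\right).
\]
In other words, $5^{k/2}$ must be replaced by $(1+4t^2)^{k/2}$, and the two agree only when $t^2=1$.

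Your $t=0$ test confirms this: the printed formula gives $5^{-k/2}\mathbf{h}_k(x)f_n(x)$ at $t=0$, not the initial datum. A direct check: take $k=1$ and a single frequency $h=0$. The evolution of $2xe^{-x^2/2}=-2\partial_x e^{-x^2/2}$ is $2x(1+2it)^{-3/2}e^{-\frac{x^2}{2(1+2it)}}$. The corrected formula reproduces this via $\frac{1-2it}{1+4t^2}=\frac{1}{1+2it}$, while the printed one is off by the factor $\sqrt{(1+4t^2)/5}$. The paper's proof stops at ``we get the result'' right after invoking \eqref{inte2} and never performs this simplification, which is where the misprint entered. Rather than leaving the constant to be ``checked'', finish the computation and state and prove the corrected theorem.
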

\begin{proof}
We a similar arguments used in Theorem \ref{Fourier1}. We apply the Fourier transform to the ODE in \eqref{star4} and we get
	\begin{equation}
		\label{eqH}
		\widehat{\phi_{k,n}}(\lambda,t)= c(\lambda) e^{-i \lambda^2 t}.
	\end{equation}
	Since $c(\lambda)= \hat{\psi}(\lambda, 0)$ and the Hermite functions are eigenfunctions functions of the Fourier transform, i.e. $\mathcal{F}[\mathbf{h}_k](\lambda) = \widehat{\mathbf{h}_k}(\lambda) =\sqrt{2\pi} (-i)^k \mathbf{h}_k(\lambda)$, we have
	\begingroup\allowdisplaybreaks
	\begin{align}
		\nonumber
		c(\lambda)  =  \int_{\mathbb{R}} e^{-i \lambda x}\phi_k(x,0)dx
		&=\int_{\mathbb{R}} e^{-i\lambda x } \mathbf{h}_k(x)f_n(x)dx\\
		\nonumber
		&=\sum_{j=0}^{n} Z_j(n,a) \int_{\mathbb{R}} e^{-ix \left(\lambda -h_j(n)\right)}\mathbf{h}_k(x)dx\\
		\nonumber
		&=\sum_{j=0}^{n} Z_j(n,a) \mathcal{F}[\mathbf{h}_k]\left(\lambda-h_j(n)\right)\\
		\label{con1}
		&=\sqrt{2\pi}\sum_{j=0}^{n} Z_j(n,a) (-i)^k\mathbf{h}_k\left(\lambda-h_j(n)\right).
	\end{align}
\endgroup
	By plugging \eqref{con1} into \eqref{eqH}, using the definition of the Hermite functions i.e. $ h_k(x) = e^{-x^2/2}H_k(x)$, and inverse Fourier transform we have
		\begingroup\allowdisplaybreaks
	\begin{align*}
		\phi_k(x,t) & = \frac{1}{{2\pi}} \int_{\mathbb{R}} e^{i\lambda x} \hat{\phi_k}(\lambda , t ) d\lambda \\
		&=\frac{1}{\sqrt{2\pi}} \int_{\mathbb{R}} e^{i\lambda x}
		\left(
		e^{ -i\lambda^2t }  
		\sum_{j=0}^{n} Z_j(n,a) 
		(-i)^k \mathbf{h}_k(\lambda-h_j(n) )
		\right)
		d\lambda\\
		& =\frac{(-i)^k}{\sqrt{2\pi}} \sum_{j=0}^{n} Z_j(n,a) \int_{\mathbb{R}} e^{i\lambda x -i\lambda^2 t} \mathbf{h}_k (\lambda- h_j(n)) d\lambda\\
		&=\frac{(-i)^k}{\sqrt{2\pi}} \sum_{j=0}^{n} Z_j(n,a) \int_{\mathbb{R}} e^{i\lambda x - i \lambda^2 t} e^{ -\frac{(\lambda-h_j(n) )^2}{2} } H_k(\lambda- h_j(n) )d\lambda\\
		&=\frac{(-i)^k}{\sqrt{2\pi}} \sum_{j=0}^{n} Z_j(n,a) e^{-\frac{h_j^2(n)}{2}} \int_{\mathbb{R}}
		e^{-(it+\frac{1}{2})\lambda^2 + \lambda (h_j(n)  + ix)}
		H_k(\lambda - h_j(n) ) d\lambda.
	\end{align*}
\endgroup
By using \eqref{inte2} with $a:=it+\frac{1}{2}$, $b:=h_j(n)  + ix$ and $c:=h_j(n)$ we get the result.
\end{proof}

\begin{remark}
	Observe that if $k=0$ we get that $H_0(x)=1$, and we get back the result of Theorem \ref{Fourier1}.
\end{remark}

\section*{Acknowledgments}

Antonino De Martino was supported by MUR grant “Dipartimento di Eccellenza 2023-2027". The research of Kamal Diki is supported by the Research–Flanders (FWO) under grant number 1268123N.

\end{document}